\newtheorem{theorem}{Theorem}
\newtheorem{lemma}[theorem]{Lemma}
\newtheorem{corollary}[theorem]{Corollary}
\newtheorem{remark}[theorem]{Remark}
\newcommand{\id}{\operatorname{Id}} % Identidad
\newcommand{\man}{M} % affine connection
\newcommand{\affcon}{D} % manifold with metric
\newcommand{\D}{\mathcal{D}} %Walker distribution
\newcommand{\affman}{\Sigma} % affine manifold
\newcommand{\J}{\mathfrak{J}} % para-complex structure
\newcommand{\G}{\mathfrak{G}} % para-complex structure
\begin{document}

\title[Geometric
properties of generalized symmetric spaces]
{Geometric
properties of generalized symmetric spaces}
\author{\quad E. Calvi\~{n}o-Louzao, E. Garc\'{\i}a-R\'{\i}o, M.E. V\'{a}zquez-Abal, R. V\'{a}zquez-Lorenzo}
\address{Department of Geometry and Topology, Faculty of Mathematics,
University of Santiago de Compostela, 15782 Santiago de Compostela, Spain}
\email{estebcl@edu.xunta.es, eduardo.garcia.rio@usc.es, elena.vazquez.abal@usc.es,
ravazlor@edu.xunta.es}
\thanks{Supported by project MTM2009-07756 (Spain)}
\subjclass{53C50, 53B30}
\date{}
\keywords{Homogeneous spaces, Generalized symmetric spaces, K\"{a}hler and para-K\"{a}hler manifolds, Riemannian extensions.}

\begin{abstract}
It is shown that four-dimensional generalized symmetric spaces can be naturally equipped with some additional structures defined by means of their curvature operators. As an application, those structures are used to characterize generalized symmetric spaces.
\end{abstract}

\maketitle

\section*{Introduction}

The study of the curvature is a central problem in pseudo-Riemannian geometry. It is well-known that the existence of some additional structure (K\"{a}hler, Sasakian, quaternionic K\"{a}hler, etc.) strongly influences the properties of the curvature. The K\"ahler form of any four-dimensional K\"ahler manifold (or more generally, of  some classes of Hermitian or almost K\"ahler manifolds) is an eigenvector of the self-dual Weyl curvature operator. A less explored problem is the converse one, which focuses on the existence of some conditions on the curvature which allow the  construction of  additional structures on the manifold, aimed to characterize   the underlying geometry in terms of the properties of the new structures. The generalized Goldberg-Sachs Theorem is a classical example (see \cite{Apost} and the references therein), showing that any Einstein four-dimensional manifold with a special self-dual Weyl curvature operator is naturally endowed with a complex structure resulting in a locally conformally K\"ahler manifold.

The purpose of this paper is to contribute to the above general problem by showing some characterization results of a specially nice class of pseudo-Riemannian manifolds, namely the four-dimensional generalized symmetric spaces.
It is a well-known fact that any $3$-symmetric space admits an almost Hermitian structure. In this paper we investigate the existence of some additional structures on generalized symmetric spaces. Our approach is based on the study of the curvature operator and the possibility of constructing some structures induced by certain $2$-forms which exhibit a distinguished behaviour with respect to the curvature operator, inspired  by the Goldberg-Sachs Theorem.

We show that any four-dimensional generalized symmetric space is either conformally symmetric (i.e., the Weyl curvature tensor is parallel) or otherwise the self-dual and the anti-self-dual Weyl curvature operators have a distinguished eigenvalue of multiplicity one. Local sections of the corresponding one-dimensional eigenbundles define a structure of symplectic pair and we will show that the different classes of four-dimensional generalized symmetric spaces are characterized by the properties of these structures.

{Generalized} symmetric spaces have been extensively investigated from different points of view during the last years. Curvature properties of four-dimensional generalized symmetric spaces have been firstly considered in \cite{Calvaruso-Leo08} and later used in \cite{CZ} and \cite{LV} to investigate their underlying pseudo-Riemannian structure and parallel hypersurfaces in those model spaces. The existence of harmonic vector fields and Ricci solitons has been studied in \cite{BO} and \cite{Calvaruso2}. Finally, it  is worth mentioning that some additional complex and para-complex structures on four-dimensional generalized symmetric spaces have been constructed in \cite{Calvaruso}.

The paper is organized as follows. We recall some basic material in Section \ref{section-1}, focusing on the decomposition of the curvature operator and the existence of almost Hermitian and almost para-Hermitian structures in dimension four. Walker structures appear in a natural way and we recall some general  facts in Section \ref{section-Walker} to be used in subsequent sections.
The geometry of four-dimensional generalized symmetric spaces is treated in Section \ref{esg-dim-4}, where their curvature is investigated and some additional structures are constructed. Finally,  we use  these structures in Section \ref{section-characterizations} to characterize the different types of four-dimensional generalized symmetric spaces. Although we have tried to avoid calculations as much as possible, an Appendix is included with a detailed expression of the Levi-Civita connection and the curvature tensor for the three classes of four-dimensional generalized symmetric spaces.

\section{Preliminaries}\label{section-1}

Throughout this paper, $(M,g)$ is a pseudo-Riemannian manifold and $R$ denotes the
curvature tensor taken with the sign convention $R(X,Y)=\nabla_{[X,Y]}-[\nabla_X,\nabla_Y]$,
$\nabla$ being the Levi-Civita connection. The Ricci tensor, $\rho$, and the corresponding Ricci
operator, $Ric$, are given by $\rho(X,Y)=g( Ric\,X,Y)$ $=$ $\operatorname{trace}\{Z \mapsto
R(X,Z)Y\}$. The scalar curvature $\tau$ is defined as the trace of the Ricci tensor. Finally, we denote by  $\circ$   the symmetric product given by
$\xi_1\circ\xi_2=\frac{1}{2}(\xi_1\otimes\xi_2+\xi_2\otimes\xi_1)$.

The curvature of any pseudo-Riemannian manifold $(M,g)$, as an endomorphism of the space of 2-forms $\Lambda^2=\Lambda^2(M)$, naturally decomposes under the action of the orthogonal group as
$R=\frac{\tau}{n(n-1)}\operatorname{Id}_{\Lambda^2} + \rho_0+W$, where $n=\dim M$, $\rho_0=\rho-\frac{\tau}{n}g$  is the trace-free Ricci tensor and $W$ is the Weyl curvature tensor.

\subsection{Four-dimensional geometry}

In dimension  four the  Hodge star operator allows to define the self-dual and anti-self-dual Weyl curvature operators. Let $\man$ be a manifold and $g$ a metric of arbitrary signature. Let $\{e_1,e_2,e_3,e_4\}$ be an orthonormal basis  and let $\{e^1,e^2,e^3,e^4\}$ be the associated dual basis.
Considering the space of $2$-forms
\[
    \Lambda^2(M)=\operatorname{Span}  \{e^i\wedge e^j: \,
i,j\in\{1,2,3,4\},\, i<j\},
\]
the  Hodge  star operator $\star$ acts on
$\Lambda^2(M)$ as
\[
e^i\wedge e^j\wedge \star(\,e^k\wedge e^\ell)=(\delta_k^i
\delta_\ell^j-\delta_\ell^i\delta_k^j)\,\varepsilon_i\varepsilon_j\, e^1\wedge
e^2 \wedge e^3 \wedge e^4,
\]
where $\varepsilon_i=g( e_i,e_i)$ and  $\delta_i^j$ denotes the
Kronecker symbol. The properties of  the Hodge  star operator depend on the signature of $g$  and it defines a product structure in the Riemannian and  neutral signature settings (i.e.,
$\star^2=\id_{\Lambda^2(M)}$), inducing a splitting
$\Lambda^2(M)=\Lambda^+(M)\oplus\Lambda^-(M)$, where $\Lambda^+=\Lambda^+(M)$
and $\Lambda^-=\Lambda^-(M)$ denote the spaces of  self-dual and anti-self-dual $2$-forms,  respectively:
\[
\Lambda^+=\{\alpha\in \Lambda^2: \star \alpha=\alpha\}, \qquad
\Lambda^-=\{\alpha\in \Lambda^2: \star \alpha=-\alpha\}.
\]
An orthonormal basis for the self-dual and anti-self-dual space is given by
\[
    \Lambda^\pm=\operatorname{Span}\{E_1^\pm,E_2^\pm,E_3^\pm\},
\]
 where
\begin{equation}\label{PRELIMbasedosformas}
\begin{array}{l}
E_1^\pm=\, ( e^1\wedge e^2\pm \varepsilon_3\varepsilon_4 e^3\wedge e^4)/\sqrt{2},\\
\noalign{\medskip}
E_2^\pm=\, ( e^1\wedge e^3\mp \varepsilon_2\varepsilon_4 e^2\wedge e^4)/\sqrt{2},\\
\noalign{\medskip} E_3^\pm=\, ( e^1\wedge e^4\pm
\varepsilon_2\varepsilon_3 e^2\wedge e^3)/\sqrt{2}.
\end{array}
\end{equation}
The induced metric on $\Lambda^2$   is given by
\[
\langle\langle x\wedge y,z\wedge w\rangle\rangle=g(
x,z)g( y,w) -g( y,z)g( x,w),
\]
which is Riemannian if $g$ is  positive definite or of signature $(----++)$ if $g$ is of neutral signature
$(2,2)$. Moreover, in the latter case, the restrictions of the metric to the subspaces $\Lambda^\pm$ are of Lorentzian signature $(--+)$ and  $\{E_1^\pm,E_2^\pm,E_3^\pm\}$ is an orthonormal basis where
$E_1^\pm$ are spacelike and  $E_i^\pm$ are timelike for $i=2,3$.

Since $\Lambda^2$ splits  under the action of the  Hodge star operator so does the curvature acting on the space of 2-forms as
$$
R=\frac{\tau}{12}\id_{\Lambda^2(M)}+\rho_0+W^++W^-
$$
where $W^\pm=\frac{1}{2}(W\pm \star W)$ denote the self-dual and the  anti-self-dual Weyl curvature operators.

\subsection{Almost Hermitian and almost para-Hermitian structures}

An \emph{almost Hermitian manifold} is a pseudo-Riemannian manifold $(M,g)$ equipped with an orthogonal almost complex structure (i.e.,  a $(1,1)$-tensor field $J$ satisfying $J^2=-\id$ and $J^*g=g$). Any almost Hermitian structure $(g,J)$ gives rise to a 2-form $\Omega(X,Y)=g(JX,Y)$ inducing an orientation which agrees with the one defined by the almost complex structure in the Riemannian case. However, if the metric $g$ is of signature $(--++)$ then both $J$ and $\Omega$ induce opposite orientations. Hence, orienting $M$ by the almost complex structure, the K\"ahler form $\Omega$ is  self-dual in the positive definite case and anti-self-dual in the neutral signature case. A straightforward calculation shows that $\Omega$ is a spacelike section of  $\Lambda^+$ or $\Lambda^-$ (depending on the signature of $g$) and conversely, any such spacelike section defines an almost Hermitian structure on $M$. Moreover, $(M,g,J)$ is said to be \emph{almost K\"ahler} if the 2-form $\Omega$ is closed and $(M,g,J)$ is said to be \emph{K\"ahler} if, in addition, the almost complex structure is integrable (or, equivalently, the K\"ahler form $\Omega$ is parallel).

A \emph{para-K\"ahler manifold} is a symplectic manifold
admitting two transversal Lagrangian foliations (see \cite{Ivanov-Zamkovoy05} and the references therein). Such a structure induces a decomposition of the tangent bundle $TM$ into the Whitney sum of Lagrangian subbundles $L^+$ and $L^-$, that is, $TM=L^+\oplus L^-$. By
generalizing this definition, an \emph{almost para-Hermitian
manifold} {is} an almost symplectic manifold $(M,\Omega)$ whose
tangent bundle splits into the Whitney sum of Lagrangian
subbundles. This definition implies that the $(1,1)$-tensor field
$\J$ defined by $\J=\pi_{L^+}-\pi_{L^-}$ is an \emph{almost para-complex
structure} (i.e.,  $\J^2=\id$ on $M$) such that $\Omega(\J X,\J Y)$ $=$
$-\Omega(X,Y)$ for all $X,Y\in\Gamma TM$, where $\pi_{L^\pm}$ are the projections of $TM$ onto $L^\pm$. The 2-form $\Omega$ induces a non-degenerate
$(0,2)$-tensor field $g$ on $M$ defined by $g(X,Y)$ $=$
$\Omega(\J X,Y)$ such that $g(\J X,\J Y)=-g(X,Y)$ for all $X,Y\in\Gamma TM$. It follows now that the
$2$-form $\Omega$ defines a timelike section of $\Lambda^+$, and
conversely, any timelike section of $\Lambda^+$ defines an
almost para-Hermitian structure.
Finally, an almost para-Hermitian structure $(g,\J)$ is said to be
\emph{almost para-K\"ahler} if the associated 2-form $\Omega$ is closed.

Throughout  this paper   an almost para-Hermitian structure is said to be  \emph{opposite} if the para-K\"ahler form $\Omega\in\Gamma\Lambda^-$ and an almost Hermitian structure is said to be \emph{opposite} if the K\"ahler form $\Omega\in\Gamma\Lambda^-$ ($\Omega\in\Gamma\Lambda^+$ in the neutral signature case). The existence of a (para)-K\"ahler  structure strongly influences the curvature of the manifold since, in any   case, the (para)-K\"ahler form is a distinguished eigenvector of the self-dual Weyl curvature operator, which is diagonalizable of the form
$W^+=\operatorname{diag}\frac{\tau}{12}[2,-1,-1]$. We emphasize that the K\"{a}hler form is anti-self-dual in the almost Hermitian neutral case of signature $(2,2)$ so the above holds true for the anti-self-dual curvature operator $W^-$ in this case.

A \emph{symplectic pair} on a four-dimensional manifold $M$ is a pair of non-trivial closed $2$-forms $(\omega,\eta)$  of constant and complementary ranks, for which $\omega$ (resp. $\eta$)
restricts to the leaves of the kernel foliation of $\eta$ (resp. $\omega$) as a symplectic form
\cite{BK}. Then $\Omega_\pm=\omega\pm\eta$ are symplectic forms on $M$ compatible with different orientations. Indeed, a symplectic pair can be equivalently defined by two symplectic forms
$(\Omega_+,\Omega_-)$ satisfying
\begin{equation}\label{eq:intro}
\Omega_+\wedge\Omega_-=0,
\qquad
\Omega_+\wedge\Omega_+=-\Omega_-\wedge\Omega_-\,.
\end{equation}

If $(\Omega_+,\Omega_-)$ is a symplectic pair, then the kernels of $\Omega_+\pm\Omega_-$ define complementary foliations with minimal leaves, and conversely, any symplectic pair on a four-dimensional manifold is given by a pair of
two-dimensional oriented complementary minimal foliations on $M$
\cite{BK, BCGHM}.

\subsection{Walker structures and Riemannian extensions}\label{section-Walker}

It is immediate to recognize that the $\pm1$-eigenspaces of a para-complex structure $\J$ define isotropic subbundles $L^\pm$ of $TM$ which are parallel (i.e.,
$\nabla L^\pm\subset L^\pm$) in the para-K\"ahler setting. Hence any para-K\"ahler manifold
has an underlying Walker structure, i. e. it admits a parallel distribution where the metric tensor degenerates  (see \cite{Brozos-Garcia-Gilkey-Nikevic-Vazquez09} for more information on Walker manifolds).

A special family of Walker structures is the one defined by the so-called Riemannian  extensions
(we refer to  \cite{Patterson-Walker52} for further details  and also to  \cite{CGRGVL09, KSek} for a modern exposition).
Since Riemannian  extensions will also play a distinguished role in our study, next we recall their construction. We restrict to the four-dimensional setting, which is the purpose of this paper.
Let $(\affman,\affcon)$ be an affine surface, denote by $T^\ast \affman$ its cotangent bundle, and let $\pi:T^\ast \affman\rightarrow\affman$ be the canonical projection. For each symmetric $(0,2)$-tensor field $\Phi$ on $\affman$, $T^\ast \affman$ can be naturally equipped with a pseudo-Riemannian metric $g_{\affcon,\Phi}$ of signature $(--++)$, the \emph{Riemannian  extension} of $\affcon$ and $\Phi$, which is determined by\[
   % g_\affcon(\omega^V,X^C)=\omega(X)^V, \,\,
%    g_\affcon(\omega^V,\theta^V)=0,\,\,
    g_{\affcon,\Phi}(X^C,Y^C)=-\iota(\affcon_XY + \affcon_YX)+(\pi^*\Phi)(X^C,Y^C),
\]
where  $X^C$, $Y^C$ are the complete lifts to $T^\ast
\affman$ of vector fields $X$, $Y$ on $\affman$ and $\iota$ is the evaluation map. In the induced coordinate system
$(x^i,x_{i'})$ on $T^\ast \affman$, the Riemannian extension is given by
\begin{equation}\label{eq:Riemannianextension}
    g_{\affcon,\Phi}=\left(
    \begin{array}{cc}
        -2x_{k'}\Gamma_{ij}^k +\Phi_{ij}& \id_2
        \\[0.1in]
        \id_2  & 0
        \end{array}
    \right),
\end{equation}
with respect to the local basis of coordinate vector fields, where $\Gamma_{ij}^k$ are the Christoffel symbols of the affine connection $\affcon$ with respect to the coordinates $(x^i)$ on $\affman$ and $\Phi=\Phi_{ij}dx^i\otimes dx^j$. Moreover the distribution  $\D=\operatorname{ker}\,\pi_*$ is null and parallel.

Riemannian  extensions relate the affine geometry of $(\affman,\affcon)$ with the pseudo-Riemannian geometry of $(T^*\affman, g_{\affcon,\Phi})$ and moreover, they are the underlying structure of many Walker manifolds. For instance, self-dual Walker manifolds as well as Ricci flat Walker manifolds are Riemannian  extensions up to some modifications (see \cite{Brozos-Garcia-Gilkey-Nikevic-Vazquez09,Garcia-Gilkey-Nikevic-Vazquez13} for details).

\section{Four-dimensional generalized symmetric spaces}\label{esg-dim-4}

Symmetric spaces are geometrically characterized by the fact that the (local) geodesic symmetries are isometries. This classical result motivated the study of Riemannian manifolds whose geodesic symmetries satisfy some weaker condition aimed to characterize some generalizations of locally symmetric spaces. The study of D'Atri spaces \cite{DN} is just a significant example, where the geodesic symmetries are assumed to be volume-preserving. Within this general problem, other kinds of geodesic reflections (with respect to higher-dimensional submanifolds) were also investigated \cite{Chen-Vanhecke, Chen-Vanhecke2, Willmore-Vanhecke} as well as other kinds of geodesic transformations generalizing the similarities and inversions of the Euclidean space (see
\cite{DAtri, EGR-Vanhecke}).

Generalized symmetric spaces appear in a natural way when considering local transformations in a neighbourhood of a fixed point which are not necessarily involutive ($s_p^2=\id$), but satisfying $s_p^k=\id$ for some constant $k\geq 3$. Recall that a \emph{$s$-structure} in a pseudo-Riemannian manifold $(\man,g)$ is a family $\{s_p, p\in \man\}$ of isometries of the manifold (which are called
 \emph{symmetries in $p$}) such that $p$ is an  isolated fixed point.  A $s$-structure is said to be \emph{regular} if the application $(p,q)\in\man\times\man\to s_p(q)$
is smooth and  for each pair of points
$p,q \in \man$ one has $s_p \circ s_q=s_{\overline{p}} \circ s_p$, where $\overline{p}=s_p(q)$.
Moreover, a $s$-structure $\{s_p\}$ is said to be of \emph{order $k$}
$(k\geq 2)$ if for all $p\in \man$, $s_p^k=\id_\man$, and $k$ is the minimum of all natural numbers satisfying this property. A $s$-structure is of \emph{infinity order} if there is no natural number $k$ such that $s_p^k=\id_\man$. Examples of $s$-structures of order $3$ and $4$ are naturally related to the existence of certain almost Hermitian structures (see, for example,  \cite{Gray, Ni-Va}).

A \emph{generalized symmetric space} is a pseudo-Riemannian manifold $(\man,g)$ which admits at least a regular $s$-structure. Kowalski showed that all generalized symmetric spaces are necessarily homogeneous and classify them in dimension $\leq 5$  \cite{Kowalski80}.
While the only three-dimensional (Riemannian or Lorentzian) generalized symmetric space is the Lie group $\operatorname{sol}_3$, the situation is richer in dimension four. Despite of the fact that there is a single Riemannian four-dimensional generalized symmetric space (which is of order $3$), in the neutral signature setting there exist generalized symmetric spaces which are of order $3$ or infinity. (Four-dimensional examples do not exist in the Lorentzian signature).
Besides their own interest, it is worth mentioning that generalized symmetric spaces are the underlying structure of some natural geometrical conditions (see, for example, \cite{Apostolov-Armstrong-Draghici02, Fino05} and the references therein).

The classification of pseudo-Riemannian four-dimensional generalized symmetric spaces was given by $\check{C}$ern\'y  and Kowalski in \cite{Cerny-Kowalski82}, where they obtained four types of generalized symmetric spaces, which actually reduce to three as Remark \ref{re:type C} shows. In what follows we consider the three non-symmetric different classes and investigate their geometry,  constructing some additional structures which, in some cases completely characterize them.
Some of these structures  were already constructed by Calvaruso \cite{Calvaruso} in a purely algebraic way. It is worth emphasizing that our construction strongly  depends on curvature properties. Most of the calculations, although long and tedious, are rather straightforward and therefore we have kept them at minimum aimed to make the paper more clear and readable.

\smallskip
\subsection{Almost K\"{a}hler and opposite K\"{a}hler structure of generalized symmetric spaces of Type I}

Generalized symmetric spaces of Type I are of order $3$, and the corresponding pseudo-Riemannian metric may be positive definite or of neutral signature. The model space
$(\man,g)$ corresponds to  $\mathbb{R}^4$ with coordinates $(x,y,u,v)$ and metric
\begin{equation}\label{GENERtype A metric}
\begin{array}{rcl}
g\!\!&\!\!=\!\!&\!\pm[(\sqrt{1+x^2+y^2}-x)du\!\circ\! du +(\sqrt{1+x^2+y^2}+x)dv\!\circ\! dv-2y
du\!\circ\! dv]\\
\noalign{\medskip}
 &&+\lambda[(1+y^2)dx\!\circ\! dx +(1+x^2)dy\!\circ\! dy-2xy dx\!\circ\! dy]/(1+x^2+y^2),
\end{array}
\end{equation}
where $\lambda$ is a non-null constant. The possible metric signatures are $(4,0)$, $(0,4)$ and $(2,2)$.

Next we show how the curvature tensor (by means of the Weyl conformal curvature operator) gives rise to a structure of symplectic pair on $(\man,g)$ with parallel opposite symplectic structure.
We construct the structures in the  positive definite case, being  the construction completely analogous in the neutral signature setting. Assume the metric signature in \eqref{GENERtype A metric} is $(++++)$
(assuming, without lost of generality, $\lambda >0$), and  consider the local orthonormal basis
\[
\begin{array}{ll}
\displaystyle
e_1=\frac{1}{\sqrt{\lambda (x^2+y^2)}}(-y\partial_x+x\partial_y),&
\quad \displaystyle
e_2=\sqrt{\frac{x^2+y^2+1}{\lambda\left(x^2+y^2\right)}}(x\partial_x+y\partial_y),\\
\noalign{\medskip}
\displaystyle
e_3=\frac{x+\sqrt{x^2+y^2}}{\sqrt{2} y \mu_{-1}
}\partial_u+\frac{1}{\sqrt{2} \mu_{-1} }\partial_v,&
\quad\displaystyle
e_4=\frac{x-\sqrt{x^2+y^2}}{\sqrt{2} y \mu_1
}\partial_u+\frac{1}{\sqrt{2} \mu_1 }\partial_v,
\end{array}
\]
where
$\mu_\varepsilon(x,y)=\sqrt{-\frac{\sqrt{x^2+y^2}}{\varepsilon
y^2+\left(x-\sqrt{x^2+y^2+1}\right)
   \left(x \varepsilon +\sqrt{x^2+y^2}\right)}}
$, $(\varepsilon=\pm 1)$.

Consider the orientation on $\man$ given by the basis $\{
e_1,\dots,e_4\}$. Then a long but straightforward calculation shows that the self-dual and anti-self-dual Weyl curvature operators can be written with respect to the orthonormal basis $\{ E_i^\pm\}$ of $\Lambda^\pm$
 given by   \eqref{PRELIMbasedosformas}  as follows:
\[
\begin{array}{ll}
W^+=\left(
\begin{array}{ccc}
 \frac{1}{2 \lambda } & 0 & 0 \\
 0 & -\frac{1}{4 \lambda } & 0 \\
 0 & 0 & -\frac{1}{4 \lambda }
\end{array}
\right), &\quad W^-=\left(
\begin{array}{ccc}
 -\frac{1}{2 \lambda } & 0 & 0 \\
 0 & \frac{1}{4 \lambda } & 0 \\
 0 & 0 & \frac{1}{4 \lambda }
\end{array}
\right).
\end{array}
\]
These  operators,  $W^\pm$, have  a distinguished eigenvalue,
$\pm\frac{1}{2 \lambda }$, whose associated eigenspace is generated by $E_1^\pm$.

Note that the local construction above depends exclusively on curvature properties, and hence  it is invariant by local isometries. As   generalized symmetric spaces are homogeneous and we assume that they are simply connected (in other case we would work at universal cover level) we have that $\operatorname{ker}(W^\pm\mp\frac{1}{2
\lambda }\id_{\Lambda^\pm})$ define a self-dual $2$-form and an anti-self-dual $2$-form,
$\Omega_\pm=\sqrt{2}E_1^\pm$, globally on $\man$.
Moreover,
\[
\Omega_+\wedge\Omega_-=0, \qquad
\Omega_+\wedge\Omega_+=-\Omega_-\wedge\Omega_-,
\]
and hence $(\Omega_+,\Omega_-)$ defines a symplectic pair on $\man$ if both forms
 $\Omega_\pm$ are closed.

We start with the self-dual $2$-form $\Omega_+ = e^1\wedge e^2 +
e^3\wedge e^4$. The $2$-form $\Omega_+$ induce an almost Hermitian structure $J_+$ on $\man$,
defined by $g(J_+X,Y)=\Omega_+(X,Y)$ and hence
$\Omega_+$ can be written in local coordinates as
\[
\Omega_+=\frac{\lambda}{\sqrt{x^2+y^2+1}}\,\, dy\wedge dx +du\wedge dv.
\]
A long but straightforward calculation shows that $\Omega_+$ is a symplectic form (i.e., $d\Omega_+=0$).
In an analogous way, the anti-self-dual $2$-form $\Omega_- =
e^1\wedge e^2 - e^3\wedge e^4$ induces an almost Hermitian structure $J_-$ on $\man$  defined by $g(J_-X,Y)=\Omega_-(X,Y)$ and hence
$\Omega_-$ can be written in local coordinates as
\[
\Omega_-=\frac{\lambda}{\sqrt{x^2+y^2+1}}dy\wedge dx -du\wedge dv.
\]
A direct computation shows that $\Omega_-$ is a symplectic structure. Furthermore, the $2$-form $\Omega_-$ is parallel and $(g,J_-)$ is an opposite K\"{a}hler structure.

In the neutral signature case one proceeds in a completely analogous way. It is, however, worth emphasizing that the orthonormal basis $\{ E^\pm_1,E^\pm_2,E^\pm_3\}$ is now of Lorentzian signature and that both one-dimensional subspaces $\operatorname{ker}(W^\pm\mp\frac{1}{2\lambda }\id_{\Lambda^\pm})$ are spacelike. Hence, the corresponding $2$-forms induce almost Hermitian structures on $(\man,g)$. Finally, observe that in the case of  neutral signature $(--++)$ the almost complex structure and the K\"{a}hler form of an almost Hermitian structure induce opposite orientations on $\man$.

It is well-known that the Ricci operator $Ric$ of any K\"{a}hler manifold is invariant by the complex structure, and hence $Ric\,J_-=J_-\, Ric$. Moreover, an explicit calculation shows that the Ricci operator of \eqref{GENERtype A metric} is given by
\[
Ric=
-\frac{3}{2 \lambda}
\left(
\begin{array}{cccc}
    1 &   0 & 0 & 0
    \\
    0 & 1 & 0 & 0
    \\
    0 & 0 & 0 & 0
    \\
    0 & 0 & 0 & 0
\end{array}
\right),
\]
%\[
%\rho=
%\frac{3}{2 (x^2 + y^2+1)}
%\left(
%\begin{array}{cccc}
%    - y^2- 1 &  x y & 0 & 0
%    \\
%     x y & -  x^2-1 & 0 & 0
%    \\
%    0 & 0 & 0 & 0
%    \\
%    0 & 0 & 0 & 0
%\end{array}
%\right),
%\]
and thus the Ricci operator is $J_+$-invariant (i.e.,  $Ric\, J_+=J_+\, Ric$).

\smallskip

Summarizing the above, we have:

\begin{theorem}\label{GENERth:a1}
Any four-dimensional generalized symmetric space of Type I is naturally equipped with a symplectic pair $(\Omega_+,\Omega_-)$ such that
$(\man,g,\Omega_+,\Omega_-)$ is an almost K\"{a}hler and opposite K\"{a}hler manifold with  $J_\pm$-invariant Ricci operator.
\end{theorem}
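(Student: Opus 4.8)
The plan is to assemble the theorem from the pieces already established in the discussion above, verifying that each claimed property is a consequence of either a curvature computation or a formal argument that is insensitive to the signature of $g$. Concretely, I would proceed as follows.

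\medskip

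\noindent\textbf{Step 1: The one-dimensional eigenbundles and global $2$-forms.} Using the explicit orthonormal frame $\{e_1,\dots,e_4\}$ and the displayed matrices of $W^\pm$, one sees that $W^+$ and $W^-$ each have a simple eigenvalue $\pm\frac{1}{2\lambda}$ with eigenline spanned by $E_1^\pm$. Since this construction is curvature-defined, it is invariant under local isometries; homogeneity (and passing to the universal cover if necessary) then promotes the local eigenlines to globally defined line bundles $\ker(W^\pm\mp\frac{1}{2\lambda}\id_{\Lambda^\pm})$, and choosing the unit section gives the global $2$-forms $\Omega_\pm=\sqrt2\,E_1^\pm=e^1\wedge e^2\pm e^3\wedge e^4$. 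This is exactly the argument spelled out before the theorem.

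\medskip

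\noindent\textbf{Step 2: Symplectic pair.} The algebraic identities $\Omega_+\wedge\Omega_-=0$ and $\Omega_+\wedge\Omega_+=-\Omega_-\wedge\Omega_-$ hold pointwise because $\Omega_\pm$ are (anti-)self-dual of the normalized form in \eqref{PRELIMbasedosformas}; compare \eqref{eq:intro}. It then remains to check closedness. Rewriting $\Omega_+=\frac{\lambda}{\sqrt{x^2+y^2+1}}\,dy\wedge dx+du\wedge dv$ and $\Omega_-=\frac{\lambda}{\sqrt{x^2+y^2+1}}\,dy\wedge dx-du\wedge dv$ in the coordinates of \eqref{GENERtype A metric}, a direct differentiation gives $d\Omega_\pm=0$ (the $du\wedge dv$ part is manifestly closed, and $d\!\left(\frac{\lambda}{\sqrt{x^2+y^2+1}}\,dy\wedge dx\right)=0$ since it is a top form on the $(x,y)$-plane). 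Hence $(\Omega_+,\Omega_-)$ is a symplectic pair, and $\Omega_\pm$ define almost Hermitian structures $J_\pm$ via $g(J_\pm X,Y)=\Omega_\pm(X,Y)$; in particular $(g,J_\pm)$ are almost K\"ahler, with $J_-$ the opposite one. In the neutral signature case the same computation works, with the only change being that $\Omega_\pm$ are now spacelike sections of $\Lambda^\pm$ (of Lorentzian signature), as already noted.

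\medskip

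\noindent\textbf{Step 3: Parallelism of $\Omega_-$ and $J_\pm$-invariance of $Ric$.} For the opposite structure one shows $\nabla\Omega_-=0$; the cleanest route is to compute $\nabla e^i$ from the Appendix's Levi-Civita connection and verify $\nabla(e^1\wedge e^2-e^3\wedge e^4)=0$ directly, which simultaneously gives integrability of $J_-$ and hence the K\"ahler (opposite) property. The $J_-$-invariance of $Ric$ is then automatic from the general fact that the Ricci operator of a K\"ahler manifold commutes with the complex structure. Finally, $J_+$-invariance of $Ric$ follows by inspection: the explicit diagonal form $Ric=-\frac{3}{2\lambda}\operatorname{diag}(1,1,0,0)$ commutes with $J_+$, since $J_+$ interchanges $e_1\leftrightarrow e_2$ and $e_3\leftrightarrow e_4$ (up to sign), preserving the two eigenspaces $\operatorname{Span}\{e_1,e_2\}$ and $\operatorname{Span}\{e_3,e_4\}$ of $Ric$.

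\medskip

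\noindent The main obstacle is purely computational rather than conceptual: obtaining the matrices of $W^\pm$ (and of $\nabla$, needed for $\nabla\Omega_-=0$) in the chosen frame requires a lengthy curvature calculation, which is why the paper relegates the connection and curvature formulas to the Appendix. Once those are in hand, every remaining step is a short verification.
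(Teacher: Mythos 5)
Your proposal is correct and follows essentially the same route as the paper: extract the simple eigenvalue $\pm\frac{1}{2\lambda}$ of $W^\pm$ to obtain the global forms $\Omega_\pm=\sqrt{2}E_1^\pm$, verify the symplectic-pair identities and closedness in the coordinates of \eqref{GENERtype A metric}, check $\nabla\Omega_-=0$ for the opposite K\"ahler property, and deduce the $J_-$-invariance of $Ric$ from the K\"ahler condition while reading off the $J_+$-invariance from the explicit diagonal form of the Ricci operator. The only caveat, which you already acknowledge, is that the matrices of $W^\pm$, $\nabla$ and $Ric$ are taken as given computational inputs rather than derived.
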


As an application of the results in \cite{BK, BCGHM} one has:

\begin{corollary}\label{GENERth:a2}
Any four-dimensional generalized symmetric space of Type I is a strictly almost K\"{a}hler manifold foliated by two complementary foliations with minimal surfaces whose induced metric is definite.
\end{corollary}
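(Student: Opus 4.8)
The plan is to read the statement off Theorem~\ref{GENERth:a1} together with the structure theory of symplectic pairs recalled in Section~\ref{section-1}, the only genuinely new point being a curvature obstruction to the integrability of $J_+$.

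First I would invoke Theorem~\ref{GENERth:a1}: since $(\Omega_+,\Omega_-)$ is a symplectic pair on $\man$, the results of \cite{BK,BCGHM} quoted above show that the kernels of $\Omega_++\Omega_-$ and of $\Omega_+-\Omega_-$ are two complementary two-dimensional foliations of $\man$ with minimal leaves. With the local expressions obtained in the construction one has $\Omega_++\Omega_-=2\,e^1\wedge e^2$ and $\Omega_+-\Omega_-=2\,e^3\wedge e^4$, so that the two kernel distributions are $\operatorname{Span}\{e_3,e_4\}$ and $\operatorname{Span}\{e_1,e_2\}$. Next I would check that the induced metric on the leaves is definite: this is automatic in the positive definite case, while in the neutral case one uses that, as noted in the construction, $\ker(W^\pm\mp\tfrac1{2\lambda}\id_{\Lambda^\pm})$ is spacelike, i.e. $\langle\langle E_1^\pm,E_1^\pm\rangle\rangle=\tfrac12(\varepsilon_1\varepsilon_2+\varepsilon_3\varepsilon_4)>0$. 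Since each $\varepsilon_i=\pm1$ and $g$ has signature $(2,2)$, this forces $\varepsilon_1\varepsilon_2=\varepsilon_3\varepsilon_4=1$, hence both $\operatorname{Span}\{e_1,e_2\}$ and $\operatorname{Span}\{e_3,e_4\}$ are definite planes and the induced metric on every leaf is definite.

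It then remains to show that $(\man,g,J_+)$ is \emph{strictly} almost K\"ahler. That it is almost K\"ahler is already part of Theorem~\ref{GENERth:a1}, since $\Omega_+$ is closed. To see that it is not K\"ahler I would argue by contradiction: if $J_+$ were integrable then $(g,J_+)$ would be K\"ahler, so its K\"ahler form $\Omega_+$ would be an eigenvector of the self-dual Weyl operator with eigenvalue $\tfrac{\tau}{6}$, because a (para-)K\"ahler structure forces $W^+=\operatorname{diag}\tfrac{\tau}{12}[2,-1,-1]$. But the explicit curvature computation gives $W^+\Omega_+=\tfrac1{2\lambda}\Omega_+$, whereas the displayed Ricci operator yields $\tau=\operatorname{trace}(Ric)=-\tfrac3\lambda$, so $\tfrac\tau6=-\tfrac1{2\lambda}\neq\tfrac1{2\lambda}$ since $\lambda\neq0$; this is the desired contradiction. (Equivalently, one observes that $W^+=-\operatorname{diag}\tfrac{\tau}{12}[2,-1,-1]$, which never has the form required of a K\"ahler structure compatible with the chosen orientation.) As a homogeneous space is K\"ahler as soon as it is K\"ahler at one point, $(g,J_+)$ is strictly almost K\"ahler, which completes the proof.

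The argument is essentially a repackaging of Theorem~\ref{GENERth:a1}; the main (and rather modest) obstacle I anticipate is the signature bookkeeping in the $(2,2)$ case — keeping careful track of which of the $e_i$ are timelike so as to conclude that the leaves carry a definite metric — together with using the correct normalization $\tfrac\tau6$ for the distinguished eigenvalue of $W^+$ on a K\"ahler form, which is precisely what rules out the integrability of $J_+$.
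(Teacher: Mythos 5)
Your proposal is correct and follows essentially the same route as the paper, which derives the corollary directly from Theorem~\ref{GENERth:a1} together with the quoted fact that the kernels of $\Omega_+\pm\Omega_-$ of a symplectic pair define complementary minimal foliations \cite{BK,BCGHM}. The only material you add is the explicit eigenvalue argument (comparing $W^+\Omega_+=\tfrac{1}{2\lambda}\Omega_+$ with the value $\tfrac{\tau}{6}=-\tfrac{1}{2\lambda}$ forced by a K\"ahler structure) to justify \emph{strictness}, and the signature bookkeeping showing the leaf metrics are definite; both checks are sound and merely make explicit what the paper leaves implicit.
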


\begin{remark}\rm
It follows from the expressions of $W^\pm$ above that the self-dual and the anti-self-dual Weyl curvature operators have a distinguished eigenvalue whose corresponding one-dimensional eigenspace is spacelike. Hence (in opposition to the results in \cite{CZ2}) the underlying metric of any Type I generalized symmetric space of neutral signature  cannot admit any Walker structure with respect to any orientation, since in such a case the corresponding distinguished eigenspace should be timelike \cite{DRGRVL}.
\end{remark}

\subsection{Almost para-K\"{a}hler and opposite para-K\"{a}hler structure of generalized symmetric spaces of Type II}\label{sub:sub}

Generalized symmetric spaces of Type II have infinite order, and the corresponding pseudo-Riemannian metric is of neutral signature. The model space
$(\man,g)$ corresponds to  the space $\mathbb{R}^4$ with coordinates $(x,y,u,v)$
and metric
\begin{equation}\label{GENERtype D metric}
\begin{array}{rcl}
g&=&(\sinh(2u) - \cosh(2u)\sin(2v))dx\circ dx
\\
\noalign{\medskip}
&&
+ (\sinh(2u)+\cosh(2u) \sin(2v))dy\circ dy\\
\noalign{\medskip}
&&
-2\cosh(2u)\cos(2v)dx\circ dy+\lambda (du\circ du-\cosh^2(2u) dv\circ dv),
\end{array}
\end{equation}
where $\lambda$ is a non-zero real constant.

Next we proceed as in the previous subsection and construct a structure of symplectic pair on $(\man,g)$ by using the self-dual and anti-self-dual Weyl curvature operators. Consider the following local orthonormal frame given by ({we assume $\lambda>0$}):
\[
\begin{array}{ll}
\displaystyle
e_1=\frac{1}{\sqrt{\lambda}\cosh(2u)}\partial_v,&
\quad\displaystyle
e_2=\frac{1}{\sqrt{\frac{2e^{-2 u}}{1-\sin (2
v)}}}\left((\operatorname{sec}(2v)+\tan(2v))\partial_x+\partial_y\right),
\\
\noalign{\medskip}
\displaystyle
e_3=\frac{1}{\sqrt{\lambda}}\partial_u,&
\quad\displaystyle
e_4=\frac{1}{\sqrt{\frac{2e^{2 u}}{1+\sin (2v)}}}\left((\tan(2v)-\sec(2v))\partial_x)+\partial_y\right),
\end{array}
\]
where $e_1$, $e_2$ are timelike and $e_3$ and $e_4$ are spacelike.
Then the self-dual and anti-self-dual Weyl curvature operators can be written with respect to the orthonormal basis $\{ E_i^\pm\}$ of $\Lambda^\pm$
 defined at \eqref{PRELIMbasedosformas}   as follows:
\[
\begin{array}{ll}
W^+=\left(
\begin{array}{ccc}
 -\frac{1}{\lambda } & 0 & 0 \\
 0 & \frac{2}{ \lambda } & 0 \\
 0 & 0 & -\frac{1}{ \lambda }
\end{array}
\right), &\quad W^-=\left(
\begin{array}{ccc}
 \frac{1}{\lambda } & 0 & 0 \\
 0 & -\frac{2}{ \lambda } & 0 \\
 0 & 0 & \frac{1}{ \lambda }
\end{array}
\right).
\end{array}
\]
Both operators have a distinguished eigenvalue,
$\pm\frac{2}{\lambda}$, of multiplicity one.
Since the metric \eqref{GENERtype D metric} is of neutral signature, the induced metrics on  $\Lambda^\pm$ have Lorentzian signature. In opposition to the previously discussed case of Type I, the restriction of the metric to $\operatorname{ker}(W^\pm\mp\frac{2}{\lambda}\id_{\Lambda^\pm})$ is negative definite and hence, any local section of such subbundles is timelike. Since $(\man,g)$ is simply connected (otherwise we move to the universal cover), denote by $\Omega_\pm=\sqrt{2}E_2^\pm$ the self-dual and the anti-self-dual $2$-forms defined as sections of $\operatorname{ker}(W^\pm\mp\frac{2}{\lambda}\id_{\Lambda^\pm})$. Then $\|\Omega_\pm\|=-2$ from where it follows that  $\Omega_\pm$ are the K\"{a}hler forms of two almost para-Hermitian structures $(g,\J_\pm)$
defined by $g(\mathfrak{J}_\pm
X,Y)=\Omega_\pm(X,Y)$, which induce opposite orientations.

Now, a direct calculation shows that $\Omega_+ = e^1\wedge e^3 + e^2\wedge e^4$ expresses in local coordinates as
\[
\Omega_+=dx\wedge dy +\lambda \cosh(2u)dv\wedge du
\]
and it is closed, which shows that $(g,\J_+)$ is an almost para-K\"{a}hler structure on $\man$.

Analogously,  the anti-self-dual $2$-form $\Omega_-=e^1\wedge e^3 - e^2\wedge e^4$ expresses in local coordinates as
\[
\Omega_-=-dx\wedge dy +\lambda \cosh(2u)dv\wedge du,
\]
and it is closed as well. Moreover, it follows that $\nabla \Omega_-=0$ and hence $(g,\J_-)$ is an opposite para-K\"{a}hler structure.

Recall that the local construction above depends exclusively on curvature properties, and hence  it is invariant by local isometries. As the generalized symmetric spaces are homogeneous and we are assuming  that they are simply connected, the structures $(g,\J_\pm)$ are globally defined on $\man$.
Moreover,  a direct calculation from \eqref{GENERtype D metric} shows that the Ricci operator is given by
\[
Ric=
-\frac{6}{\lambda}
\left(
\begin{array}{cccc}
    0 & 0 & 0 & 0
    \\
     0 & 0 & 0 & 0
    \\
    0 & 0 & 1 & 0
    \\
    0 & 0 & 0 & 1
\end{array}
\right),
\]
%\[
%\rho=
%\left(
%\begin{array}{cccc}
%    0 & 0 & 0 & 0
%    \\
%     0 & 0 & 0 & 0
%    \\
%    0 & 0 & -6 & 0
%    \\
%    0 & 0 & 0 & 6 \cosh^2 (2u)
%\end{array}
%\right),
%\]
showing that it is $\mathfrak{J}_+$-invariant.
Summarizing all the above we have:

\begin{theorem}\label{GENERth:d1}
Any four-dimensional generalized symmetric space of Type II is naturally equipped with a symplectic pair $(\Omega_+,\Omega_-)$ such that
$(\man,g,\Omega_+,\Omega_-)$ is an almost para-K\"{a}hler and opposite para-K\"{a}hler manifold with $\mathfrak{J}_\pm$-invariant Ricci operator.
\end{theorem}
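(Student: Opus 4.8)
The plan is to read off the pair $(\Omega_+,\Omega_-)$ from the Weyl curvature of the model metric \eqref{GENERtype D metric} and then check, one at a time, the properties listed in the statement, the argument running parallel to the one used for Type I. First I would compute the Levi-Civita connection and the full curvature tensor of \eqref{GENERtype D metric} in the orthonormal frame $\{e_1,e_2,e_3,e_4\}$ fixed above (with $e_1,e_2$ timelike, $e_3,e_4$ spacelike), and from these the self-dual and anti-self-dual Weyl operators $W^\pm$ in the basis $\{E_i^\pm\}$ of \eqref{PRELIMbasedosformas}. This yields $W^\pm=\operatorname{diag}[\mp\tfrac1\lambda,\pm\tfrac2\lambda,\mp\tfrac1\lambda]$, so each $W^\pm$ has a distinguished simple eigenvalue $\pm\tfrac2\lambda$ with eigenline spanned by $E_2^\pm$. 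Since $g$ is of neutral signature, the induced inner product on $\Lambda^\pm$ is Lorentzian and $\|E_2^\pm\|^2=-1$, so every local section of $\operatorname{ker}(W^\pm\mp\tfrac2\lambda\,\id)$ is timelike; by the correspondence recalled in Section \ref{section-1}, a timelike section of $\Lambda^+$ (resp.\ of $\Lambda^-$) is the para-K\"ahler form of an almost para-Hermitian (resp.\ opposite almost para-Hermitian) structure $(g,\mathfrak{J}_\pm)$ defined by $g(\mathfrak{J}_\pm X,Y)=\Omega_\pm(X,Y)$, with $\Omega_\pm=\sqrt2\,E_2^\pm$.

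Next I would deal with globalization: the eigenline $\operatorname{ker}(W^\pm\mp\tfrac2\lambda\,\id)$ is defined purely in terms of the curvature operator, hence preserved by local isometries; since a generalized symmetric space is homogeneous and we may pass to its simply connected cover, these eigenbundles are trivial and $\Omega_\pm$ are globally defined $2$-forms on $\man$. In the coordinates $(x,y,u,v)$ one obtains $\Omega_+=dx\wedge dy+\lambda\cosh(2u)\,dv\wedge du$ and $\Omega_-=-dx\wedge dy+\lambda\cosh(2u)\,dv\wedge du$, and a direct differentiation gives $d\Omega_+=d\Omega_-=0$; closedness of $\Omega_+$ yields the almost para-K\"ahler property, while computing $\nabla\Omega_-$ with the connection of \eqref{GENERtype D metric} shows $\nabla\Omega_-=0$, so $(g,\mathfrak{J}_-)$ is opposite para-K\"ahler. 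The identities \eqref{eq:intro}, $\Omega_+\wedge\Omega_-=0$ and $\Omega_+\wedge\Omega_+=-\Omega_-\wedge\Omega_-$, then follow either from the coordinate expressions or, more conceptually, from $\langle\langle\Lambda^+,\Lambda^-\rangle\rangle=0$ together with $\|\Omega_+\|^2=\|\Omega_-\|^2$; since $\Omega_\pm$ are non-degenerate, being para-K\"ahler forms, $(\Omega_+,\Omega_-)$ is a symplectic pair.

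It remains to treat the Ricci operator. I would compute $\rho$ from \eqref{GENERtype D metric} and observe that $Ric$ preserves the coordinate splitting $\operatorname{Span}\{\partial_x,\partial_y\}\oplus\operatorname{Span}\{\partial_u,\partial_v\}$ — it vanishes on the first summand and is a scalar multiple of the identity on the second — and that this splitting is also preserved by $\mathfrak{J}_+$, as one reads off from the coordinate form of $\Omega_+$ and the block structure of $g$; hence $Ric\,\mathfrak{J}_+=\mathfrak{J}_+\,Ric$. The $\mathfrak{J}_-$-invariance is then automatic: exactly as in the K\"ahler case one checks that the Ricci operator of any para-K\"ahler manifold commutes with the para-complex structure, so $\nabla\Omega_-=0$ already forces $Ric\,\mathfrak{J}_-=\mathfrak{J}_-\,Ric$.

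The step I expect to be the main obstacle is the curvature computation itself: producing the Levi-Civita connection, the Riemann tensor, and hence $W^\pm$ and $\nabla\Omega_-$ from the explicit metric \eqref{GENERtype D metric} in a non-orthonormal coordinate frame is long, though entirely mechanical; everything afterwards — the algebraic identification of the eigenlines, the closedness checks, the wedge identities, and the Ricci computation — is routine. The connection and curvature data needed for these steps are collected in the Appendix.
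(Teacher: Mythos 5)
Your proposal follows the paper's own argument essentially step for step: the same orthonormal frame, the same diagonalization $W^\pm=\operatorname{diag}[\mp\tfrac1\lambda,\pm\tfrac2\lambda,\mp\tfrac1\lambda]$ with timelike eigenline spanned by $E_2^\pm$, the same globalization via homogeneity and simple connectedness, the same coordinate expressions and closedness/parallelism checks for $\Omega_\pm$, and the same Ricci computation (your description of $Ric$ as vanishing on $\operatorname{Span}\{\partial_x,\partial_y\}$ and equal to $-\tfrac6\lambda\operatorname{Id}$ on $\operatorname{Span}\{\partial_u,\partial_v\}$ is correct and consistent with the displayed matrix). The only additions are the explicit verification of the identities \eqref{eq:intro} and the derivation of $\mathfrak{J}_-$-invariance of the Ricci operator from the general para-K\"ahler fact, both of which the paper leaves implicit for Type II (mirroring its treatment of Type I).
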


Again, as an application of the results in \cite{BK, BCGHM} one has:

\begin{corollary}\label{GENERth:a2}
Any four-dimensional generalized symmetric space of Type II is a strictly almost para-K\"{a}hler manifold foliated by two complementary foliations with minimal surfaces whose induced metric is Lorentzian.
\end{corollary}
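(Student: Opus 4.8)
The plan is to obtain the corollary as a direct consequence of Theorem~\ref{GENERth:d1} and the structure theory of symplectic pairs recalled in Section~\ref{section-1}. By Theorem~\ref{GENERth:d1}, the two-forms $\Omega_\pm$ constructed in Section~\ref{sub:sub} form a symplectic pair on $\man$; in particular $\Omega_++\Omega_-$ and $\Omega_+-\Omega_-$ are closed non-degenerate $2$-forms related as in \eqref{eq:intro}. Therefore, by the results of \cite{BK, BCGHM}, the distributions $\operatorname{ker}(\Omega_++\Omega_-)$ and $\operatorname{ker}(\Omega_+-\Omega_-)$ integrate to a pair of complementary two-dimensional oriented foliations $\mathcal{F}_1$ and $\mathcal{F}_2$ of $\man$ whose leaves are minimal, with $T\man=T\mathcal{F}_1\oplus T\mathcal{F}_2$. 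This accounts for the ``two complementary foliations with minimal surfaces'' part of the statement.

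Next I would make the two foliations explicit using the orthonormal frame $\{e_1,\dots,e_4\}$ of Section~\ref{sub:sub}. Since $\Omega_+=e^1\wedge e^3+e^2\wedge e^4$ and $\Omega_-=e^1\wedge e^3-e^2\wedge e^4$, one has $\Omega_++\Omega_-=2\,e^1\wedge e^3$ and $\Omega_+-\Omega_-=2\,e^2\wedge e^4$, so that $T\mathcal{F}_1=\operatorname{Span}\{e_2,e_4\}$ and $T\mathcal{F}_2=\operatorname{Span}\{e_1,e_3\}$. Because $e_1$ and $e_2$ are timelike while $e_3$ and $e_4$ are spacelike, the metric induced on the leaves of either foliation has signature $(-,+)$; in particular it is non-degenerate (so the two foliations are mutually transversal and, the frame being orthonormal, the splitting is orthogonal, consistently with $(2,2)=(-,+)\oplus(-,+)$) and Lorentzian, as claimed.

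It remains to check that the almost para-K\"ahler structure $(g,\J_+)$ of Theorem~\ref{GENERth:d1} is strict, i.e., that $\J_+$ is not integrable. I would argue by contradiction through the curvature. If $(g,\J_+)$ were para-K\"ahler, then, since its para-K\"ahler form $\Omega_+$ is a (timelike) self-dual $2$-form, the self-dual Weyl operator would be diagonalizable with $W^+=\frac{\tau}{12}\operatorname{diag}[2,-1,-1]$ and $\Omega_+$ as distinguished eigenvector, so $W^+\Omega_+=\frac{\tau}{6}\Omega_+$. However, the Ricci operator computed in Section~\ref{sub:sub} gives $\tau=\operatorname{trace}Ric=-\frac{12}{\lambda}$, whence $\frac{\tau}{6}=-\frac{2}{\lambda}$, whereas the explicit form of $W^+$ there yields $W^+\Omega_+=\frac{2}{\lambda}\Omega_+$. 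Since $\lambda\neq 0$ this is a contradiction, so $\J_+$ cannot be integrable and $(g,\J_+)$ is strictly almost para-K\"ahler.

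I do not expect a genuine obstacle, since the corollary is essentially a repackaging of Theorem~\ref{GENERth:d1} together with \cite{BK, BCGHM}; the only point requiring some care is the last one, where one must be sure that the sign discrepancy in the eigenvalue of $W^+$ on $\Omega_+$ is genuine and not an artefact of the curvature sign convention fixed in Section~\ref{section-1}. Should one wish to sidestep this, an equivalent and completely routine alternative is to read off $\nabla\Omega_+\neq 0$ directly from the expression for the Levi-Civita connection of \eqref{GENERtype D metric} collected in the Appendix, which likewise shows that $\J_+$ is not parallel.
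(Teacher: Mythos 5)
Your proposal is correct and follows essentially the same route as the paper: the corollary is obtained as a direct application of Theorem~\ref{GENERth:d1} together with the results of \cite{BK, BCGHM} on symplectic pairs, with the Lorentzian character of the leaves read off from the causal types of $e_1,\dots,e_4$. The only point you elaborate beyond what the paper makes explicit is the strictness of $(g,\J_+)$, and your eigenvalue comparison is sound and internally consistent: the genuinely para-K\"ahler structure $(g,\J_-)$ does satisfy $W^-\Omega_-=\frac{\tau}{6}\Omega_-=-\frac{2}{\lambda}\Omega_-$, while $W^+\Omega_+=\frac{2}{\lambda}\Omega_+\neq\frac{\tau}{6}\Omega_+$, so no sign-convention artefact is involved (both $\tau$ and $W^{\pm}$ flip sign together under a change of curvature convention).
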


\begin{remark}\rm
Any four-dimensional generalized symmetric space of Type II is opposite para-K\"{a}hler and hence the underlying metric is a Walker one, since the $\pm 1$-eigenspaces of $\J_-$ are null parallel plane fields on $(\man,g)$
(see \cite{Brozos-Garcia-Gilkey-Nikevic-Vazquez09} for more information on Walker manifolds).
\end{remark}

\subsection{Conformal structure of generalized symmetric spaces of Type III}\label{section:2.3}

As well as for Type I, generalized symmetric spaces of Type III are of order three. The corresponding pseudo-Riemannian metric is of neutral signature and the model space
$(\man,g)$ corresponds to  the space $\mathbb{R}^4$ with coordinates $(x,y,u,v)$
and metric
\begin{equation}\label{GENERtype B metric}
g=\lambda(dx\circ dx+dy\circ dy+dx\circ  dy)+
e^{-y}(2dx+dy)\circ dv+e^{-x}(dx+2dy)\circ du,
\end{equation}
where $\lambda\in \mathbb{R}$.

In opposition to the previously discussed types I and II, the Weyl conformal curvature operator of a four-dimensional Type III generalized symmetric space does not seem to provide useful information. Indeed, assuming $\lambda> 0$, a straightforward calculation shows that
\[
\begin{array}{ll}
\displaystyle
e_1=\frac{1}{\sqrt{3\lambda}} \partial_x
-\frac{2}{\sqrt{3\lambda}}\partial_y,&
\quad\displaystyle
e_2=\frac{1}{\sqrt{\lambda}}
\partial_x, \\
 \noalign{\medskip}
 \displaystyle
 e_3=\frac{1}{\sqrt{\lambda}}
 \partial_y-e^x\sqrt{\lambda}\partial_u,&
 \quad\displaystyle
e_4=-\frac{2}{\sqrt{3\lambda}}\partial_x+\frac{1}{\sqrt{3\lambda}}
\partial_y-\frac{e^x\sqrt{\lambda}}{\sqrt{3}}\partial_u+\frac{2e^y\sqrt{\lambda}}{\sqrt{3}}\partial_v
\end{array}
\]
is a local orthonormal basis,  where $e_1$, $e_2$ are spacelike and  $e_3$, $e_4$ are timelike vector fields.
Then, the self-dual and the anti-self-dual Weyl curvature operators express in the associated orthonormal basis  \eqref{PRELIMbasedosformas} as
\[
\begin{array}{ll}
W^+=\frac{1}{3\lambda}\left(
\begin{array}{ccc}
 4 & 2 & -2 \sqrt{3} \\
 -2 & -1 & \sqrt{3} \\
 2 \sqrt{3} & \sqrt{3} & -3
\end{array}
\right), &\quad W^-=\left(
\begin{array}{ccc}
 0 & 0 & 0 \\
 0 & 0 & 0 \\
 0 & 0 & 0
\end{array}
\right),
\end{array}
\]
which shows that the self-dual Weyl curvature operator is two-step
nilpotent.

Actually, metrics \eqref{GENERtype B metric} have a distinguished conformal structure depending on the parameter $\lambda$. Recall that a pseudo-Riemannian manifold is said to be \emph{conformally symmetric} if the Weyl tensor is parallel. Moreover, while any Riemannian conformally symmetric manifold is either symmetric (i.e.,  $\nabla R=0$) or locally conformally flat (i.e.,  $W=0$), there exist non-trivial examples in the general pseudo-Riemannian setting.

\begin{theorem}\label{GENERth:d1}
Any four-dimensional generalized symmetric space of Type III is conformally symmetric. Moreover, it is locally conformally flat if and only if the parameter $\lambda$ in \eqref{GENERtype B metric} vanishes.
\end{theorem}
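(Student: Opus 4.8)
The plan is to verify both assertions by direct computation on the explicit metric \eqref{GENERtype B metric}, reducing the work as much as possible by exploiting the structure already exhibited. For the first claim, that the space is conformally symmetric, I would compute $\nabla W$ and show it vanishes identically. Since $W^-=0$ (as shown by the displayed matrix in the orthonormal frame $\{E_i^\pm\}$, i.e. the anti-self-dual part of the Weyl operator is zero), it suffices to handle $W^+$, and in fact one only needs $\nabla W^+ = 0$. The most efficient route is to note that the orthonormal frame $\{e_1,\dots,e_4\}$ given above, together with the induced basis $\{E_1^+,E_2^+,E_3^+\}$ of $\Lambda^+$, lets us write $W^+ = \frac{1}{3\lambda} N$ for a fixed numerical $3\times 3$ matrix $N$; one then computes the Levi-Civita connection of \eqref{GENERtype B metric} (this is exactly the content recorded in the Appendix for Type III), uses it to compute $\nabla_{e_k} E_i^+$ in terms of the $E_j^\pm$, and checks that the covariant derivative of the endomorphism $W^+$ of $\Lambda^2$ vanishes. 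Because the components of $W^+$ in this frame are constants, $\nabla W^+ = 0$ will follow provided the connection $1$-forms act on $\Lambda^+$ in a way compatible with $N$; concretely one checks $[\,\omega(e_k), N\,] = 0$ for each $k$, where $\omega(e_k)$ is the $\mathfrak{so}$-valued connection coefficient acting on $\Lambda^+$. As a sanity check, conformal symmetry together with homogeneity is consistent with the space being neither symmetric nor conformally flat in general, which is the phenomenon the surrounding text emphasizes.

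For the second claim I must show $W = 0$ if and only if $\lambda = 0$. Since $W = W^+ + W^-$ and $W^- = 0$ identically, this reduces to $W^+ = 0 \iff \lambda = 0$. From the displayed expression $W^+ = \frac{1}{3\lambda}N$ with $N$ a nonzero constant matrix, one direction is immediate: if $\lambda \neq 0$ then $W^+ \neq 0$, so the space is \emph{not} locally conformally flat. For the converse, $\lambda = 0$ is a degenerate value for the orthonormal frame written above (the normalizations contain $\sqrt{3\lambda}$ and $\sqrt{\lambda}$), so I would substitute $\lambda = 0$ directly into \eqref{GENERtype B metric}, obtaining $g = e^{-y}(2dx+dy)\circ dv + e^{-x}(dx+2dy)\circ du$, and compute its curvature tensor from scratch; the claim is that the resulting Weyl tensor vanishes, equivalently that this metric is locally conformally flat. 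In fact for $\lambda = 0$ the metric is of the ``double null'' Walker type with the two null coordinate planes spanned by $\partial_u,\partial_v$ parallel, and a direct curvature computation (or identification with a Riemannian extension via \eqref{eq:Riemannianextension}) shows the Weyl tensor is zero.

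The main obstacle is purely computational bookkeeping: assembling the Levi-Civita connection and curvature of \eqref{GENERtype B metric} and then differentiating the Weyl operator. The conceptual content is light once one observes (i) $W^- \equiv 0$, so only the self-dual part is at issue, and (ii) in the natural orthonormal frame the nonzero operator $W^+$ has \emph{constant} components, so $\nabla W = 0$ is equivalent to the connection preserving that constant endomorphism — a finite check of commutation relations rather than a differential identity. I would organize the computation by first recording $\nabla$ and $R$ for Type III (deferring the explicit formulas to the Appendix, as the paper does), then reading off $W^\pm$ in the frame $\{E_i^\pm\}$, then verifying $\nabla W^+ = 0$ frame-component by frame-component, and finally disposing of the locally-conformally-flat dichotomy by the $\lambda$-dependence of $W^+$ together with the separate $\lambda = 0$ curvature computation.
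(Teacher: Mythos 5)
Your proposal is correct and follows essentially the route the paper (implicitly) takes: the theorem is stated without a written proof, the justification being the displayed computation that $W^-=0$ and that $W^+=\frac{1}{3\lambda}N$ has constant components in the given frame, together with the explicit Levi-Civita connection recorded in the Appendix, so that $\nabla W=0$ reduces to exactly the finite commutator check you describe and the case $\lambda=0$ is a separate direct verification that $W$ vanishes. The only point worth flagging is that the displayed frame assumes $\lambda>0$, so strictly one should also cover $\lambda<0$ (for instance via the coordinate identity $W=\frac{3\lambda}{16}\,\rho\odot\rho$ invoked later in the proof of Theorem \ref{theorema:1}, which makes both assertions transparent for every value of $\lambda$).
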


\begin{remark}\label{re:type C}\rm
The original classification of four-dimensional generalized symmetric spaces contained a fourth family of metrics of Lorentzian signature, whose model space corresponds to $\mathbb{R}^4$ with coordinates
$(x,y,z,t)$ and metric
\begin{equation}\label{GENERtype C metric}
g=\pm\left( e^{2t}dx\circ dx + e^{-2t}dy\circ dy + dz\circ dt  \right).
\end{equation}
These metrics correspond to  double warped product manifolds  $N\times_{f_1}\mathbb{R}\times_{f_2}\mathbb{R}$ with flat basis $N\equiv (\mathbb{R}^2(z,t), dz\circ dt)$ and warping functions $f_1(z,t)=e^{t}$, $f_2(z,t)=e^{-t}$. It was shown in \cite{Brozos-Garcia-Vazquez06} that a multiply warped product as   above is locally conformally flat if and only if
\begin{enumerate}
\item[(i)]
The conformal metric $\overline{g}_i=f_i^{-2}dz\circ dt$ is of constant sectional curvature, $i=1,2$.
\item[(ii)]
The warping functions satisfy  the compatibility condition
\[
\frac{\Delta f_1}{f_1}+\frac{\Delta f_2}{f_2}=2\frac{\langle\nabla f_1,\nabla f_2\rangle}{f_1 f_2}.
\]
\end{enumerate}
Condition (i) is trivially satisfied. On the other hand, for an arbitrary function $f(z,t)$ on $N$ we have $\Delta f=
4 \frac{\partial^2 f}{\partial z\partial t}$ and $\nabla
f=2\frac{\partial f}{\partial t} \partial_z+
2 \frac{\partial f}{\partial z} \partial_t$,
from where (ii) follows and thus the metrics given by (\ref{GENERtype C metric}) are locally conformally flat. Moreover, a straightforward calculation shows that the Ricci tensor of any metric \eqref{GENERtype C metric} is parallel and hence metrics \eqref{GENERtype C metric} are locally symmetric. Furthermore, they are Cahen-Wallach symmetric spaces so we skip them in what follows  since we are interested in the non-symmetric cases.
\end{remark}

\section{Characterizations}\label{section-characterizations}

It is well-known that any four-dimensional (complete and simply connected) homogeneous Riemannian manifold is either symmetric or a Lie group. Although a similar result is not true in full generality for Lorentzian or neutral signature (see, for example \cite{CZ2}), all four-dimensional generalized symmetric spaces share this property, a fact which follows as a consequence of the results below.

\subsection{Characterizing Type I generalized symmetric spaces}

Riemannian four-dimensional manifolds admitting a strictly almost K\"{a}hler structure $(g,J_+)$ and opposite K\"{a}hler structure $(g,J_-)$ such that the Ricci operator is $J_+$-invariant have been investigated in \cite{Apostolov-Armstrong-Draghici02} showing that they are homogeneous and locally isometric to the unique Riemannian generalized symmetric space (see also \cite{Fino05}). This result has been extended by the authors in \cite{Calvino-Garcia-Vazquez-Vazquez3} to the neutral signature case, thus resulting in the following characterization of Type I generalized symmetric spaces. (Recall that the roles of $J_+$ and $J_-$ must   be changed in the following theorem when considering the neutral signature case, since $J_\pm$ and $\Omega_\pm$ induce opposite orientations in that setting).

\begin{theorem}\label{GENteorema de Fino}{\rm \cite{Apostolov-Armstrong-Draghici02,Calvino-Garcia-Vazquez-Vazquez3}}
Let $(\man,g)$ be a four-dimensional pseudo-Riemannian manifold. Then $(\man,g)$ is locally isometric to a Type I generalized symmetric space if and only if $(\man,g)$ admits a strictly almost K\"{a}hler structure $(g,J_+)$ and opposite K\"{a}hler structure $(g,J_-)$ such that the Ricci operator is $J_+$-invariant.
\end{theorem}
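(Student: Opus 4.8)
The plan is to prove the two implications separately. The ``only if'' direction is essentially a repackaging of facts already established, whereas the ``if'' direction is the substantial one and is the content of \cite{Apostolov-Armstrong-Draghici02} in the positive definite case and of \cite{Calvino-Garcia-Vazquez-Vazquez3} in the neutral signature case.

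For the forward implication, suppose $(\man,g)$ is locally isometric to a Type I generalized symmetric space. Theorem \ref{GENERth:a1} equips the model space with a symplectic pair $(\Omega_+,\Omega_-)$ realizing an almost K\"ahler structure $(g,J_+)$ and an opposite K\"ahler structure $(g,J_-)$ with $J_\pm$-invariant Ricci operator, and the corollary following it shows that the almost K\"ahler structure is \emph{strictly} almost K\"ahler; indeed the Ricci operator $Ric=-\frac{3}{2\lambda}\operatorname{diag}(1,1,0,0)$ computed above is not a multiple of the identity, which already rules out $J_+$ being K\"ahler. Since $\Omega_\pm$ were built solely from curvature data, as the (globalized, passing to the universal cover if necessary) sections of the distinguished one-dimensional eigenbundles $\operatorname{ker}(W^\pm\mp\frac{1}{2\lambda}\id_{\Lambda^\pm})$, this whole package is invariant under local isometries and is therefore transported to $(\man,g)$. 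In the neutral signature case one only has to recall, as noted above, that $J_+$ and $J_-$ --- equivalently $\Omega_+\in\Gamma\Lambda^+$ and $\Omega_-\in\Gamma\Lambda^-$ --- induce opposite orientations, so the roles of the two structures are interchanged throughout.

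For the converse, suppose $(g,J_+)$ is strictly almost K\"ahler, $(g,J_-)$ is opposite K\"ahler and $Ric\,J_+=J_+\,Ric$. First I would translate the hypotheses into curvature constraints: the parallelism of $\Omega_-$ makes it a distinguished eigenvector of the relevant Weyl curvature operator (which then takes the form $\frac{\tau}{12}\operatorname{diag}[2,-1,-1]$) and constrains the holonomy to preserve $\Omega_-$; the $J_+$-invariance of the Ricci tensor together with $d\Omega_+=0$ confines the algebraic curvature to a small subspace of curvature tensors; and, since in dimension four a self-dual and an anti-self-dual orthogonal complex structure commute, $P:=J_+J_-$ is a $g$-symmetric involution whose $\pm1$-eigenplane fields are precisely the complementary minimal foliations of the symplectic pair. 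The main step is then to feed these constraints into the second Bianchi identity and deduce that the covariant derivative of $\nabla J_+$ is an algebraic function of the lower-order data $g$, $J_+$, $J_-$ and the curvature; the associated prolongation being finite-dimensional, $(\man,g)$ is curvature homogeneous to all orders and hence, by a Singer-type argument, locally homogeneous. It then suffices to match the homogeneous model so obtained against the \v{C}ern\'y--Kowalski classification and recognize it as a Type I space --- alternatively one integrates the resulting structure equations directly to recover the metric \eqref{GENERtype A metric} --- the uniqueness up to local isometry within this class being exactly the rigidity proved in \cite{Apostolov-Armstrong-Draghici02,Calvino-Garcia-Vazquez-Vazquez3}.

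The hard part is this converse, and within it the crucial technical point is precisely the passage from the three first-order hypotheses to local homogeneity: one has to verify that no new moduli appear at the level of $\nabla(\nabla J_+)$, i.e.\ that this second derivative is forced to be an algebraic expression in lower-order data once $d\Omega_+=0$, $\nabla\Omega_-=0$ and $Ric\,J_+=J_+\,Ric$ are imposed simultaneously. This rests on the detailed curvature analysis of almost K\"ahler four-manifolds with $J$-invariant Ricci tensor developed in \cite{Apostolov-Armstrong-Draghici02}, and extended to the neutral signature setting in \cite{Calvino-Garcia-Vazquez-Vazquez3}, which I would invoke rather than reprove here.
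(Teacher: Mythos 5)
Your proposal matches the paper's treatment: the paper gives no proof of this theorem, obtaining the forward implication from the construction in Theorem \ref{GENERth:a1} (invariance under local isometries plus homogeneity) and delegating the converse entirely to the rigidity results of \cite{Apostolov-Armstrong-Draghici02} in the Riemannian case and \cite{Calvino-Garcia-Vazquez-Vazquez3} in the neutral signature case, exactly as you do. One caveat: your parenthetical justification of strictness --- that $Ric$ not being a multiple of the identity ``already rules out $J_+$ being K\"ahler'' --- is not valid reasoning, since four-dimensional K\"ahler manifolds need not be Einstein; strictness instead follows from checking $\nabla\Omega_+\neq 0$ directly, or from observing that the distinguished eigenvalue of $W^+$ for Type I is $\frac{1}{2\lambda}=-\frac{\tau}{6}$, whereas a K\"ahler structure would force it to equal $\frac{\tau}{6}$. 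This slip is harmless to your argument because you also invoke the corollary following Theorem \ref{GENERth:a1}, but the remark as written is incorrect.
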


\begin{remark}\label{re:remarkI}\rm
Any Type I generalized symmetric space is locally isometric to a Lie group $G$  whose Lie algebra is
given  by
$\mathfrak{g}=\operatorname{Span}\{ e_1,e_2,e_3,e_4\}$ with non-zero brackets
\[
\begin{array}{lll}
{[e_1,e_3]}=e_1+2\alpha e_2,&
{[e_1,e_4]}=\alpha e_1,&
{[e_2,e_3]}=-e_2,\\
\noalign{\medskip}
{[e_2,e_4]}=-2 e_1-\alpha e_2,&
{[e_3,e_4]}=-2\alpha e_3-2 e_4,
\end{array}
\]
and the left-invariant metric which makes $\{ e_1,e_2,e_3,e_4\}$ an orthonormal basis of signature either $(++++)$ or $(--++)$.

Moreover, Type I generalized symmetric spaces are characterized by the existence of an almost K\"{a}hler structure $(g,J_+)$ and an opposite K\"{a}hler structure $(g,J_-)$ with $J_+$-invariant Ricci operator so that the curvature tensor is $J_+$-invariant (i.e., $R(J_+X,J_+Y,J_+Z,J_+V)=R(X,Y,Z,V)$).
\end{remark}

\begin{remark}
\rm
As a consequence of the previous analysis, all the almost K\"{a}hler and opposite K\"{a}hler structures constructed in \cite{Calvaruso} are locally isomorphic.
\end{remark}

\subsection{Type II generalized symmetric spaces}

First of all, it is worth emphasizing that a characterization result like that of Theorem \ref{GENteorema de Fino} cannot be expected for Type II generalized symmetric spaces. This is due to the  existence of  four-dimensional manifolds $(\man,g)$ which are not locally homogeneous and which admit a strictly almost para-K\"{a}hler structure $(g,\J_+)$ and opposite para-K\"{a}hler structure $(g,\J_-)$ with $\J_+$-invariant Ricci operator.

Moreover, even in the homogeneous case, the authors showed in \cite{Calvino-Garcia-Vazquez-Vazquez3} the existence of  strictly almost para-K\"{a}hler structures and opposite para-K\"{a}hler structures with invariant Ricci operator that are not locally isometric to any Type II generalized symmetric spaces.
A detailed examination of four-dimensional Lie groups admitting a pair of structures as   above leads to the following possibilities.

\begin{lemma}\label{thm:algebraic possibilities Type D}
Let $G$ be a four-dimensional Lie group and $g$ a left-invariant metric on $G$. Let $(\Omega_+,\Omega_-)$ be a symplectic pair induced by almost para-K\"{a}hler structures $(\mathfrak{J}_+,\mathfrak{J}_-)$ on $(G,g)$  such that $\Omega_-$ is parallel and the Ricci operator is $\mathfrak{J}_+$-invariant. Then one of the following possibilities holds:
\begin{itemize}
\item[(i)] The self-dual and anti-self-dual Weyl curvature operators are diagonalizable.
\item[(ii)] The minimal polynomial of the  self-dual Weyl curvature operator has a double root.
\item[(iii)] The metric is self-dual and the self-dual Weyl curvature operator is two-step nilpotent and non-flat.
\end{itemize}
\end{lemma}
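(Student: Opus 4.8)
The plan is to reduce everything to the level of the Lie algebra $\mathfrak{g}=T_{e}G$, since the metric, the two para-complex structures $\mathfrak{J}_{\pm}$, the Levi--Civita connection, the curvature tensor, the Weyl operators $W^{\pm}$ and the Ricci operator $Ric$ are all left-invariant and hence determined algebraically by the structure constants of $\mathfrak{g}$ together with the inner product of signature $(2,2)$. First I would exploit that $\Omega_{-}\in\Gamma\Lambda^{-}$ is parallel (so $(g,\mathfrak{J}_{-})$ is opposite para-K\"ahler): then the $\pm1$-eigenspaces of $\mathfrak{J}_{-}$ form a pair of complementary null parallel plane fields, $(G,g)$ is a Walker Lie group, and one may fix a basis of $\mathfrak{g}$ adapted to this Walker structure in which $\mathfrak{J}_{-}$ and $\Omega_{-}$ take their standard normal form. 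In such a basis the data defining the configuration translate into a finite system of quadratic equations in the structure constants: the Jacobi identity, the parallelism $\nabla\Omega_{-}=0$ written through the Koszul formula, the closedness $d\Omega_{+}=0$ together with the symplectic-pair relations \eqref{eq:intro}, and the commutation $[\,Ric,\mathfrak{J}_{+}\,]=0$.

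Next I would solve this coupled system up to the residual freedom of changing the adapted basis. The parallelism of $\Omega_{-}$ alone forces many combinations of structure constants to vanish and pins down the connection in terms of a few free parameters; imposing in addition that $\Omega_{+}$ be closed and that $Ric$ commute with $\mathfrak{J}_{+}$ cuts the solution set down to a finite list of Lie algebras depending on at most a couple of real constants. For each representative I would then write out the curvature tensor and, using the basis $\{E_{i}^{\pm}\}$ of $\Lambda^{\pm}$ from \eqref{PRELIMbasedosformas}, the matrix of $W^{+}$ (and of $W^{-}$). Recall that $W^{+}$ is self-adjoint for the Lorentzian inner product on the three-dimensional space $\Lambda^{+}$, so a priori its Segre type is one of: diagonalizable over $\mathbb{R}$ with an orthonormal eigenbasis; a real eigenvalue together with a complex-conjugate pair; a single $2\times2$ Jordan block (with null eigenvector) plus a $1\times1$ block; or a single $3\times3$ Jordan block. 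The content of the lemma is that the explicit computation produces only the first and the third of these. In the $2\times2$-block case, since $W^{+}$ is traceless its minimal polynomial is $(x-a)^{2}(x-b)$ with $2a+b=0$, which has a double root unless $a=b=0$, in which event $W^{+}$ is two-step nilpotent; the complex-pair type (whose real minimal polynomial is squarefree) and the $3\times3$-block type are to be ruled out by the case list.

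Finally I would read off the trichotomy. The branches in which $W^{+}$ is $\mathbb{R}$-diagonalizable — where $W^{-}$ is diagonalizable as well, equal to $\frac{\tau}{12}\operatorname{diag}[2,-1,-1]$ in a suitable orthonormal basis of $\Lambda^{-}$ because $\Omega_{-}$ is para-K\"ahler — give case (i). The branches with a genuine $2\times2$ Jordan block ($a\ne b$) give case (ii). In the remaining branch $W^{+}$ is two-step nilpotent and non-flat ($W^{+}\ne0$), and the same explicit computation yields $\tau=0$; since $W^{-}=\frac{\tau}{12}\operatorname{diag}[2,-1,-1]$ this forces $W^{-}=0$, i.e. the metric is self-dual, which is case (iii). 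The main obstacle is the middle step: solving the nonlinear system and organizing its solutions into a provably exhaustive finite list, and in particular checking that it is precisely the condition $[\,Ric,\mathfrak{J}_{+}\,]=0$ that excludes the complex-eigenvalue and triple-Jordan-block possibilities; once the case list is established, computing $W^{\pm}$ and identifying its Jordan type in each branch is routine, if tedious, and of the kind relegated to the Appendix of the paper.
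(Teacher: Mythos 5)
Your proposal follows essentially the same route as the paper: reduce to the Lie algebra, fix an adapted orthonormal basis in which $\Omega_\pm=e^1\wedge e^3\pm e^2\wedge e^4$, translate the hypotheses into polynomial constraints on the structure constants (the paper packages them as integrability and minimality of the two foliations plus the vanishing of the Nijenhuis tensor of $\mathfrak{J}_-$ and the Jacobi identity, rather than imposing $\nabla\Omega_-=0$ directly via Koszul, but these are equivalent), and then solve the resulting quadratic system by a computer-aided computation and read off the Jordan type of $W^\pm$ in each branch. Both your write-up and the paper's proof defer the exhaustive solution of that nonlinear system to machine computation, so there is no substantive gap relative to the published argument.
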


\begin{proof}
Let $(\Omega_+,\Omega_-)$ be a symplectic pair on $(G,g)$. Then there exist two complementary minimal foliations $\mathcal{F}$ and $\mathcal{G}$ defined by   $2$-forms $\omega_\mathcal{F}$ and
$\omega_\mathcal{G}$   satisfying
$\Omega_\pm=\frac{1}{2}(\omega_\mathcal{F} \pm \omega_\mathcal{G})$. Let  $\{e_1,e_2,e_3,e_4\}$ be an orthonormal basis of the Lie algebra $\mathfrak{g}$ with
$e_1$, $e_2$ timelike and  $e_3$, $e_4$ spacelike, such that $\Omega_\pm = e^1\wedge e^3 \pm e^2\wedge e^4$ or, equivalently, the structures $\mathfrak{J}_-$ and $\mathfrak{J}_+$ express in the above basis as
\[\mathfrak{J}_-=\left(
\begin{array}{cccc}
 0 & 0 & 1 & 0 \\
 0 & 0 & 0 & -1 \\
 1 & 0 & 0 & 0 \\
 0 & -1 & 0 & 0
\end{array}
\right),\qquad \mathfrak{J}_+=\left(
\begin{array}{cccc}
 0 & 0 & 1 & 0 \\
 0 & 0 & 0 & 1 \\
 1 & 0 & 0 & 0 \\
 0 & 1 & 0 & 0
\end{array}
\right).
\]
Moreover, since the corresponding distributions $\mathcal{G}=\operatorname{span}\{ e_1,e_3\}$ and $\mathcal{F}$ $=$ $\operatorname{span}\{ e_2,e_4\}$ are integrable, the Lie brackets can be expressed as follows
\begin{equation}
\label{equation-1}
\left\{
\begin{array}{l}
[e_1,e_2]=a_1  e_1 + b_1  e_2 + c_1  e_3 + d_1  e_4,\\
\noalign{\medskip}
[e_1,e_3]=a_2 e_1+ c_2 e_3,\\
\noalign{\medskip}
[e_1,e_4]=a_3 e_1+ b_3 e_2+c_3 e_3+d_3 e_4,\\
\noalign{\medskip}
[e_2,e_3]=a_4 e_1+ b_4 e_2+c_4 e_3+d_4 e_4,\\
\noalign{\medskip}
[e_2,e_4]= b_5 e_2+d_5 e_4,\\
\noalign{\medskip}
[e_3,e_4]=a_6  e_1 + b_6  e_2 + c_6  e_3 + d_6  e_4.
\end{array}\right.
\end{equation}

Now, the fact that both foliations $\mathcal{F}$ and $\mathcal{G}$  have minimal leaves implies that $c_4=a_1$, $c_6=-a_3$, $d_3=-b_1$ and $d_6=b_4$.

Since we are assuming that the opposite structure $\mathfrak{J}_-$ is para-K\"{a}hler, it must be integrable (equivalently, its Nijenhuis tensor vanishes identically). This condition translates  to the Lie algebra as $c_1=-2a_3-a_4$, $c_3=-2a_1+a_6$, $d_1=b_3+2b_4$, and $d_4=2b_1-b_6$.

Summarizing the above, the expression of the brackets in \eqref{equation-1} reduces to

\begin{equation}
\label{equation-2}
\left\{
\begin{array}{l}
[e_1,e_2]=a_1  e_1 + b_1  e_2 -(2a_3+a_4)  e_3 + (b_3+2b_4)  e_4,\\
\noalign{\medskip}
[e_1,e_3]=a_2 e_1+ c_2 e_3,\\
\noalign{\medskip}
[e_1,e_4]=a_3 e_1+ b_3 e_2+(a_6-2a_1) e_3-b_1 e_4,\\
\noalign{\medskip}
[e_2,e_3]=a_4 e_1+ b_4 e_2+a_1 e_3+(2b_1-b_6) e_4,\\
\noalign{\medskip}
[e_2,e_4]= b_5 e_2+d_5 e_4,\\
\noalign{\medskip}
[e_3,e_4]=a_6  e_1 + b_6  e_2 -a_3  e_3 +b_4  e_4.
\end{array}\right.
\end{equation}
A standard calculation  shows that the almost para-K\"ahler structure $\J_+$ is integrable if and only if the constants $a_1$, $a_3$, $b_1$ and $b_4$ vanish. Hence, we assume that $a_1$, $a_3$, $b_1$, and  $b_4$ do not vanish  simultaneously.

Now, observe that \eqref{equation-2} defines a Lie algebra if and only if (just verifying the Jacobi identity):

\begin{equation}\label{equation-3}
\begin{array}{rll}
a_1 a_2 + a_1 b_4 + a_6 (b_3 + 2 b_4) + a_3 (2 b_1 - b_6) - a_4 (b_1 + c_2)&=&0,\\
\noalign{\medskip}
a_2 b_1 + 2 b_1 b_3 + 2 b_4 b_6 - b_4 c_2&=&0,\\
\noalign{\medskip}
a_1 (2 a_4 + b_5) + a_3 (2 a_6 + d_5)&=&0,\\
\noalign{\medskip}
a_4 b_5-2 a_3 a_4 - 2 a_1 a_6  - a_6 d_5&=&0,\\
\noalign{\medskip}
b_4 b_5-a_6 b_1 - a_4 b_3 - a_3 b_4  - a_1 b_6 - b_6 d_5&=&0,\\
\noalign{\medskip}
b_4 b_5 -a_2 a_3 - a_6 c_2 - b_6 d_5&=&0,\\
\noalign{\medskip}
2 b_3 b_4-a_2 b_3  + 2 b_1 b_6 - b_6 c_2&=&0,\\
\noalign{\medskip}
 b_4 (a_2 + 2 (b_3 + b_4))-2 b_1^2  + b_1 (2 b_6 - c_2)&=&0,\\
\noalign{\medskip}
2 a_3^2-2 a_1^2  + a_3 b_5 + a_4 b_5 + a_1 d_5 - a_6 d_5&=&0,\\
\noalign{\medskip}
2 a_1 b_4-a_1 b_3  - a_6 b_4 + b_1 b_5 + a_4 b_6 + a_3 (b_1 + 2 b_6) + b_3 d_5&=&0,\\
\noalign{\medskip}
a_2 a_6-2 a_1 a_2  - b_1 b_5 - a_3 c_2 - b_3 d_5&=&0,\\
\noalign{\medskip}
a_6 (b_3 + 2 b_4) - b_1 (a_4-2 a_3 + 2 b_5) + b_6(b_5-a_3)  + b_4 (a_1 + d_5)&=&0,\\
\noalign{\medskip}
b_1 d_5-5 a_1 b_1 - a_4 (2 b_3 + b_4) - a_3 (b_3 + 6 b_4) - b_3 b_5  -
 b_6 (a_6 + 2 d_5)&=&0,\\
\noalign{\medskip}
a_2 (2 a_3 + a_4) - 2 a_6 b_1 + a_3 b_3 + 4 a_3 b_4 + a_4 b_4 + a_6 b_6 + a_1 (5 b_1 - 2 b_6 + c_2)&=&0.
\end{array}
\end{equation}

The proof now follows by a computed-aided calculation. All possible solutions of \eqref{equation-3} can be explicitly obtained showing the structure of the self-dual and anti-self-dual Weyl  curvature operators as in the statement.
\end{proof}

\begin{remark}\label{re:remark}
\rm
Illustrating the previous result, recall from \cite{Calvino-Garcia-Vazquez-Vazquez3} the following simple examples corresponding to the different possibilities in Lemma \ref{thm:algebraic possibilities Type D}:
\begin{enumerate}
\item
Consider the four-dimensional Lie algebra given by the non-zero brackets
\[
\qquad [e_1,e_2]=- 2\alpha e_3,\quad
[e_1,e_4]=\alpha e_1, \quad
[e_2,e_4]=-2 \alpha e_2,\quad
[e_3,e_4]=-\alpha e_3.
\]
Then $(\Omega_+,\Omega_-)$ defined as in the proof of Lemma \ref{thm:algebraic possibilities Type D} is a strictly almost para-K\"ahler and opposite
para-K\"ahler structure if and only if  $\alpha\neq0$ with commuting Ricci operator. The self-dual and anti-self-dual Weyl curvature operators are
given by
\[ W^+=\left(
\begin{array}{ccc}
 -\alpha^2 & 0 & 0 \\
 0 & 2\alpha^2  & 0 \\
 0 & 0 & -\alpha^2
\end{array}
\right), \,\,\,
W^-=\left(
\begin{array}{ccc}
 \alpha^2 & 0 & 0 \\
 0 & -2\alpha^2  & 0 \\
 0 & 0 & \alpha^2
\end{array}
\right).
\]

\item
Consider the four-dimensional Lie algebra given by the non-zero brackets
\[\qquad
\begin{array}{ll}
[e_1,e_2]=- e_1 - e_3,&
[e_1,e_4]= e_1 + (\alpha+2) e_3,\quad
[e_2,e_3]=- e_1 - e_3,\\
\noalign{\medskip}
[e_2,e_4]=2 (\alpha +1) e_2,&
[e_3,e_4]=\alpha e_1 - e_3.
\end{array}
\]
Then $(\Omega_+,\Omega_-)$ defined as in the proof of Lemma \ref{thm:algebraic possibilities Type D} is a strictly almost para-K\"ahler and opposite
para-K\"ahler structure with commuting Ricci operator. Now the self-dual and anti-self-dual Weyl curvature operators are
\[
W^+= \left(
\begin{array}{ccc}
 \frac{2}{3} \left(\alpha^2-4 \alpha-5\right) & 0 & 4 (\alpha+1)\\
 0 & -\frac{4}{3} (\alpha+1)^2 & 0 \\
 -4 (\alpha+1) & 0 & \frac{2}{3} \left(\alpha^2+8  \alpha+7\right)
\end{array}
\right),
\]
and
\[ W^-= \left(
\begin{array}{ccc}
 \frac{2}{3} (\alpha+1)^2 & 0 & 0 \\
 0 & -\frac{4}{3} (\alpha+1)^2  & 0 \\
 0 & 0 & \frac{2}{3} (\alpha+1)^2
\end{array}
\right).
\]
Observe   that $W^+$ is diagonalizable if and only if $\alpha=-1$, in which case the manifold is flat.

\item
Consider the four-dimensional Lie algebra given by the non-zero brackets
\[
\begin{array}{ll}
[e_1,e_2]=- e_1 - e_3,&
[e_1,e_4]= e_1 + e_3,\quad
[e_2,e_3]=- e_1 - e_3,\\
\noalign{\medskip}
[e_2,e_4]=\alpha e_2 + \alpha e_4,&
[e_3,e_4]=-e_1-e_3.
\end{array}
\]
Then $(\Omega_+,\Omega_-)$ defined as in the proof of Lemma \ref{thm:algebraic possibilities Type D} is a strictly almost para-K\"ahler and opposite
para-K\"ahler structure with commuting Ricci operator. Moreover, the induced metric is self-dual (i.e.,  $W^-=0$) and Ricci flat, with
self-dual Weyl curvature operator given by
\[
W^+=\left(
\begin{array}{ccc}
 -2 \alpha & 0 & 2 \alpha \\
 0 & 0 & 0 \\
 -2 \alpha & 0 & 2 \alpha
\end{array}
\right),
\]
which shows that $W^+$ is two-step nilpotent (and non-flat for any $\alpha\neq 0$).

\end{enumerate}
\end{remark}

 Lemma \ref{thm:algebraic possibilities Type D} shows that one may not expect to characterize   Type II generalized symmetric spaces by the existence of an almost para-K\"{a}hler and opposite para-K\"{a}hler structure with invariant Ricci operator, even when restricting to the homogeneous setting. Indeed an additional condition on the diagonalizability of the self-dual and anti-self-dual Weyl curvature operators must be assumed.

\begin{remark}\rm
Considering those almost Hermitian manifolds whose curvature tensor resembles that of a K\"{a}hler manifold, Gray introduced the following identities:
$$
\begin{array}{rl}
(G_1)& R(X,Y,Z,V)=R(X,Y,JZ,JV),\\
(G_2)& R(X,Y,Z,V)-R(JX,JY,Z,V)=R(JX,Y,JZ,V)+R(JX,Y,Z,JV),\\
(G_3)& R(X,Y,Z,V)=R(JX,JY,JZ,JV).
\end{array}
$$
Obviously, $(G_3)\Rightarrow (G_2)\Rightarrow (G_1)$. Moreover, a four-dimensional almost K\"{a}hler manifold satisfies $(G_3)$ if and only if the K\"{a}hler form $\Omega$ is an eigenvector of the curvature operator acting on $\Lambda^2$ and the Ricci tensor is $J$-invariant. Further,  condition $(G_2)$ means that the Ricci operator and the self-dual Weyl curvature operator have the same symmetries as they have in the K\"{a}hler case. In fact, a four-dimensional almost K\"{a}hler structure $(g,J)$ satisfying $(G_3)$ also satisfies $(G_2)$ if and only if the norm of the covariant derivative of the K\"{a}hler form, $\|\nabla\Omega\|$, is constant \cite{Apostolov-Armstrong-Draghici02}.

The identities above take the following form in the para-Hermitian setting:
$$
\begin{array}{rl}
(\G_1)& R(X,Y,Z,V)=-R(X,Y,\J Z,\J V),\\
(\G_2)& R(X,Y,Z,V)+R(\J X,\J Y,Z,V)=-R(\J X,Y,\J Z,V)-R(\J X,Y,Z,\J V),\\
(\G_3)& R(X,Y,Z,V)=R(\J X,\J Y,\J Z,\J V).
\end{array}
$$
Now, observe that all Lie groups in Remark \ref{re:remark} satisfy the identity $(\G_3)$, but only the Lie group in Remark \ref{re:remark}-$(1)$ satisfy $(\G_2)$.
This shows that the constancy of $\|\nabla\Omega\|$ is not a sufficient condition for an almost para-K\"{a}hler $(\G_3)$-manifold to be $(\G_2)$ in contrast with the corresponding almost Hermitian situation (see \cite{Apostolov-Armstrong-Draghici02}).

Closely related, observe that Lie groups corresponding to Remark \ref{re:remark}-$(1)$ have
$\|\nabla\Omega\|^2=\frac{8}{3}\tau\neq 0$, while the corresponding almost para-K\"{a}hler structures corresponding to cases $(2)-(3)$ in Remark \ref{re:remark} are isotropic K\"{a}hler (i.e., $\|\nabla\Omega\|=0$)
\end{remark}

In order to obtain  our results we will extensively use the following criterion for local conformal equivalence of pseudo-Riemannian metrics, which immediately follows from the work of  Hall  \cite{Hall10}.
It is well-known that the Weyl conformal curvature tensor of a pseudo-Riemannian manifold only depends on the conformal class of the metric. Indeed, if $g$ and $\overline{g}=e^{2\sigma}g$ are conformally related metrics, then    the associated  $(1,3)$-Weyl conformal curvature tensors $W$ and $\overline{W}$ coincide with each other (equivalently, the $(0,4)$-Weyl conformal curvature tensors satisfy $\overline{W}=e^{2\sigma}W$). A possible converse of the above statement holds true under some non-degeneracy conditions on the Weyl tensor as follows.

\begin{lemma}{\rm\cite{Hall10}}\label{lemma-Hall}
Let $(M,g)$ be a pseudo-Riemannian manifold whose Weyl conformal curvature operator has maximal rank. Then any other metric $\overline{g}$ on $M$ with the same $(1,3)$-Weyl conformal curvature tensor  is locally conformally equivalent to the metric $g$.
\end{lemma}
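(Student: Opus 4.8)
The plan is to reduce Lemma~\ref{lemma-Hall} to a pointwise linear--algebra statement about the pair $(g,\overline g)$ and then exploit irreducibility of $\mathfrak{so}$.

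\textbf{Pointwise set-up.} Fix $p\in M$. For \emph{any} metric $h$ on $M$, the associated $(0,4)$--Weyl tensor is skew-symmetric in one pair of arguments, so the $(1,3)$--Weyl tensor of $h$, regarded as a family of endomorphisms $W_h(X,Y)$ of $T_pM$ indexed by $X,Y\in T_pM$, takes values in the skew-adjoint algebra $\mathfrak{so}(T_pM,h)$. Since by hypothesis $g$ and $\overline g$ have the \emph{same} $(1,3)$--Weyl tensor, the common endomorphisms, which I write $W(X,Y)$, lie simultaneously in $\mathfrak{so}(T_pM,g)$ and in $\mathfrak{so}(T_pM,\overline g)$.

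\textbf{Using maximal rank.} Under the canonical identification $\Lambda^2T_pM\cong\mathfrak{so}(T_pM,g)$, the Weyl conformal curvature operator $\mathcal{W}\colon\Lambda^2\to\Lambda^2$ sends a decomposable $2$-vector $X\wedge Y$ (up to a universal constant) to $W(X,Y)$; since decomposables span $\Lambda^2$, the image of $\mathcal{W}$ is exactly the linear span of $\{W(X,Y):X,Y\in T_pM\}$. Hence the hypothesis that $\mathcal{W}$ has maximal rank means precisely that this span equals all of $\mathfrak{so}(T_pM,g)$. Combined with the previous paragraph, $\mathfrak{so}(T_pM,g)\subseteq\mathfrak{so}(T_pM,\overline g)$, and since both are subspaces of $\operatorname{End}(T_pM)$ of dimension $\binom{4}{2}=6$ they coincide.

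\textbf{Conclusion.} From $\mathfrak{so}(T_pM,g)=\mathfrak{so}(T_pM,\overline g)$ one gets $\mathrm{SO}(T_pM,g)_0=\mathrm{SO}(T_pM,\overline g)_0$, so $\overline g_p$ is a symmetric bilinear form invariant under the connected group $\mathrm{SO}(T_pM,g)_0$. Equivalently, writing $\overline g_p=g_p(S\,\cdot\,,\cdot\,)$ with $S$ a $g_p$-self-adjoint automorphism of $T_pM$, a short computation (using $A^{*}=-A$ and $S^{*}=S$ for $g_p$-adjoints) shows that $\overline g_p(AX,Y)$ is skew in $X,Y$ for all $A\in\mathfrak{so}(T_pM,g)$ iff $[S,A]=0$ for all such $A$. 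Because the standard representation of $\mathrm{SO}(p,q)$ in the signatures occurring here, $(4,0)$ and $(2,2)$, is irreducible of real type, Schur's lemma gives $S=\lambda(p)\operatorname{Id}$, i.e. $\overline g_p=\lambda(p)\,g_p$. The function $\lambda$ is smooth (a ratio of smooth non-degenerate tensors) and nowhere zero; having the same signature as $g$ forces $\lambda>0$, so writing $\lambda=e^{2\sigma}$ yields $\overline g=e^{2\sigma}g$ locally, as claimed.

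\textbf{Main obstacle.} The delicate point is the correspondence in the second step: one must pin down the precise identification $\mathcal{W}(X\wedge Y)\leftrightarrow W(X,Y)$ and the exact meaning of ``maximal rank'', and in particular verify the hypothesis is not vacuous --- it holds, for instance, for the Type~I and Type~II generalized symmetric spaces considered above, where $W^{+}$ and $W^{-}$ are invertible and hence $\mathcal{W}$ has rank $6$. The final proportionality step is then routine, once one records that $\mathfrak{so}(p,q)$ with $p+q\ge 3$ acts irreducibly on its standard module with commutant $\mathbb{R}\operatorname{Id}$.
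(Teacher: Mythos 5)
The paper does not actually prove this lemma: it is quoted verbatim from Hall \cite{Hall10} and used as a black box, so there is no internal proof to compare against. Your reconstruction is correct and is essentially Hall's own argument: the shared $(1,3)$-Weyl endomorphisms $W(X,Y)$ are simultaneously skew-adjoint for $g$ and $\overline g$; maximal rank of the operator on $\Lambda^2$ means these endomorphisms span all of $\mathfrak{so}(T_pM,g)$, forcing $\mathfrak{so}(T_pM,g)=\mathfrak{so}(T_pM,\overline g)$; and the absolute irreducibility of the standard representation then gives $\overline g_p=\lambda(p)g_p$ by Schur. The identification of $\operatorname{Im}\mathcal{W}$ with $\operatorname{span}\{W(X,Y)\}$, which you rightly flag as the delicate point, is handled correctly (decomposables span $\Lambda^2$, and $X\wedge Y\mapsto W(X,Y)$ under the $g$-identification $\Lambda^2\cong\mathfrak{so}$). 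The only inaccuracy is the last sentence: in neutral signature $(2,2)$ the metric $-g$ has the same signature as $g$ and the same $(1,3)$-Weyl tensor, so ``same signature'' does not by itself force $\lambda>0$; the correct conclusion on a connected component is $\overline g=\lambda g$ with $\lambda$ smooth, nowhere zero and of constant sign, i.e.\ $\overline g$ is conformally equivalent to $g$ or to $-g$. This caveat is harmless for the applications in Sections \ref{section-characterizations} (where a fixed sign of the scalar curvature, or positivity in the Riemannian case, pins down the sign), but it should be recorded.
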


Now, as an application of Lemma \ref{thm:algebraic possibilities Type D} and Lemma \ref{lemma-Hall} we have the following:

\begin{theorem}\label{thm:characterization Type D}
Let $G$ be a four-dimensional Lie group and $g$ a left-invariant metric on $G$. Then $(G,g)$ is locally isometric (up to homothety) to a Type II generalized symmetric space if and only if $(G,g)$ admits a strictly almost para-K\"{a}hler structure $\J_+$ and an opposite para-K\"{a}hler structure $\J_-$ such that the Ricci tensor is
$\J_+$-invariant and the self-dual and the anti-self-dual Weyl curvature operators $W^\pm$ are non-zero and diagonalizable.
\end{theorem}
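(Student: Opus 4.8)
The plan is to prove the two implications separately, using Lemma~\ref{thm:algebraic possibilities Type D} to reduce the ``only if'' direction to a finite case analysis and Lemma~\ref{lemma-Hall} to upgrade curvature coincidence to local conformal (hence, after tracking scalars, local isometric) equivalence.

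\medskip
\textbf{The ``only if'' direction.} First I would observe that Theorem~\ref{GENERth:d1} (Type II) already shows that any Type~II generalized symmetric space carries a strictly almost para-K\"ahler structure $\J_+$ and an opposite para-K\"ahler structure $\J_-$ with $\J_+$-invariant Ricci operator, and the explicit expressions of $W^\pm$ computed there, namely $W^\pm=\operatorname{diag}(\mp\frac1\lambda,\pm\frac2\lambda,\mp\frac1\lambda)$, are non-zero and diagonalizable. Since local isometries (and homotheties, which only rescale $W^\pm$) preserve all of these properties, the condition is necessary.

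\medskip
\textbf{The ``if'' direction.} Suppose $(G,g)$ is a four-dimensional Lie group with left-invariant metric admitting such a pair of structures with $W^\pm$ non-zero and diagonalizable. By Lemma~\ref{thm:algebraic possibilities Type D}, the Lie algebra must fall into one of the three listed families; since we are assuming $W^+$ (and $W^-$) are diagonalizable and non-zero, possibilities (ii) and (iii) are excluded (a two-step nilpotent non-flat operator is not diagonalizable, and a genuine double root of the minimal polynomial likewise prevents diagonalizability unless the operator is a multiple of the identity, which is incompatible with being traceless and non-zero). Hence we are in case (i), and in fact the analysis behind Lemma~\ref{thm:algebraic possibilities Type D} produces, after solving the Jacobi equations~\eqref{equation-3} under the extra diagonalizability constraint, an explicit (finite, up to parameters) list of Lie algebras. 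For each such algebra one computes $W^\pm$ explicitly: being diagonalizable and traceless with the symplectic-pair eigenvector structure forced by $\Omega_\pm$, each has the form $W^\pm=\operatorname{diag}\,c_\pm[-1,2,-1]$ in the basis $\{E_i^\pm\}$, matching the Type~II model up to the constant $c_\pm$. Normalizing by a homothety $g\mapsto |c_+|\lambda^{-1}g$ (which rescales $W^\pm$ and the metric accordingly), one arranges that the $(1,3)$-Weyl tensor of $g$ coincides with that of the Type~II model metric~\eqref{GENERtype D metric} with the appropriate sign of $\lambda$.

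\medskip
\textbf{Closing the argument with Hall's lemma.} The Type~II Weyl operator $\operatorname{diag}\,\tfrac1\lambda[-1,2,-1]$ on $\Lambda^+$ (and its mirror on $\Lambda^-$) has maximal rank, so Lemma~\ref{lemma-Hall} applies: any metric on $M$ with the same $(1,3)$-Weyl tensor is locally conformally equivalent to the Type~II model. Thus $g$ (after the homothety) is conformal to~\eqref{GENERtype D metric}, say $g=e^{2\sigma}g_{\mathrm{II}}$. It then remains to show $\sigma$ is locally constant. Here I would use that \emph{both} $g$ and $g_{\mathrm{II}}$ admit parallel opposite para-K\"ahler forms $\Omega_-$ (equivalently, parallel null distributions) together with $\J_+$-invariant Ricci operators of the very rigid rank-two form displayed after Theorem~\ref{GENERth:d1}; a conformal change $g=e^{2\sigma}g_{\mathrm{II}}$ alters the Levi-Civita connection by the standard gradient terms, and demanding that $\Omega_-$ stay parallel and that the Ricci operator keep the prescribed eigenvalue structure forces the conformal factor to be constant (the obstruction is a first-order linear system in $d\sigma$ whose only solution, given the homogeneity, is $d\sigma=0$). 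Once $\sigma$ is constant, $g$ is homothetic to~\eqref{GENERtype D metric}, i.e.\ $(G,g)$ is locally isometric up to homothety to a Type~II generalized symmetric space, as claimed.

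\medskip
The main obstacle I anticipate is the last step: ruling out a non-constant conformal factor. Lemma~\ref{lemma-Hall} only gives conformal equivalence, and one genuinely needs the extra structure (parallel $\Omega_-$, the explicit Ricci operator, and homogeneity) to pin down $\sigma$; carrying this out cleanly, rather than by brute-force re-derivation of the connection in coordinates, is the delicate part. A fallback, if the conformal-rigidity argument proves awkward, is to bypass Hall's lemma for the isometry and instead match the full curvature tensor $R$ (not just $W$) of each Lie algebra from the case~(i) list against that of~\eqref{GENERtype D metric}, using the explicit curvature formulas from the Appendix, and then invoke a standard homogeneous-space rigidity result (two complete simply connected homogeneous spaces with isometric curvature tensors and covariant derivatives are isometric); the Weyl-operator input then serves only to cut the list down to the Type~II family.
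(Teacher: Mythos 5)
Your proposal follows essentially the same route as the paper: the hypotheses force case (i) of Lemma \ref{thm:algebraic possibilities Type D}, the explicit form $W^\pm=\frac{\tau}{12}\operatorname{diag}[\pm1,\mp2,\pm1]$ gives maximal rank of $W$ away from the flat case so that Lemma \ref{lemma-Hall} yields local conformal equivalence to the model \eqref{GENERtype D metric}, and the conformal factor is then shown to be constant. The paper phrases this last step exactly as you anticipate: the Weyl eigenspaces determine the \emph{same} para-complex structures $\J_\pm$ for both conformally related metrics, and two para-K\"ahler metrics in one conformal class with the same $\J_-$ must be homothetic, which after normalizing the scalar curvature gives the isometry.
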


\begin{proof} %[Proof of Theorem \ref{thm:characterization Type D}]
First of all recall that the scalar curvature of any Type II generalized symmetric space is given by $\tau=-\frac{12}{\lambda}$. A straightforward calculation shows that the self-dual and anti-self-dual Weyl curvature operators of any Lie group corresponding to Lemma \ref{thm:algebraic possibilities Type D}--(i) are given by
$$
W^+ = \frac{\tau}{12}\operatorname{diag}[1,-2,1]
\qquad
\mbox{and}\quad
W^- = \frac{\tau}{12}\operatorname{diag}[-1,2,-1].
$$
Moreover, the Ricci operator of any such Lie group is diagonalizable of the form $Ric=\frac{\tau}{2}\operatorname{diag}[0,0,1,1]$. This shows that the Weyl curvature operator is of maximal rank unless the manifold is flat, and thus Lemma \ref{lemma-Hall} implies  that all Lie groups given by Lemma \ref{thm:algebraic possibilities Type D}--(1) are conformally equivalent to the generalized symmetric space \eqref{GENERtype D metric}.

Next fix the scalar curvature of a Lie group as   above to be the scalar curvature of a metric \eqref{GENERtype D metric}, which is given by $\tau=-\frac{12}{\lambda}$.

Let $g$ and $\overline{g}=e^{2\sigma}g$ be two conformally related metrics and let $\{ e_i\}$ and $\{\overline{e}_i=e^{-\sigma}e_i\}$ be local orthonormal basis. Then the corresponding basis $\{ E^\pm_i\}$ and $\{\overline{E}^\pm_i\}$ of $\Lambda^\pm$ are related by $\overline{E}^\pm_i=e^{2\sigma}E^\pm_i$. Since the self-dual and anti-self-dual Weyl curvature operators of $g$ and $\overline{g}$ coincide, both have $\pm \frac{\tau}{6}$ as a distinguished eigenvalue. Considering local sections $\Omega_\pm$ and  $\overline{\Omega}_\pm=e^{2\sigma}\Omega_\pm$, define almost para-complex structures  $\J_\pm$ and $\overline{\J}_\pm$  as in
Section \ref{sub:sub}, which are related as follows:
$$
\begin{array}{rcl}
\overline{g}(\overline{\J}_\pm X,Y)&=&\overline{\Omega}_\pm (X,Y)=e^{2\sigma}\Omega_\pm (X,Y)=e^{2\sigma}g(\J_\pm X,Y)\\
\noalign{\medskip}
&=&\overline{g}(\J_\pm X,Y),
\end{array}
$$
which shows that the Weyl curvature tensors of any two conformally related metrics $g$ and $\overline{g}=e^{2\sigma}g$ determine the same almost para-complex structures $\J_\pm$.
Now, since $(g,\J_-)$ and $(\overline{g},\J_-)$ are para-K\"ahler, it immediately follows that the conformal factor is an homothety and hence an isometry for any fixed value of the scalar curvature.
\end{proof}

\begin{remark}\rm\label{remark111}
Alternatively, observe that the non-zero entries of the $(0,4)$-Weyl curvature tensor of any Lie group as in the above Theorem are given (up to the usual symmetries) by
$W_{1234}=W_{1423}=-\frac{1}{\lambda}$ and
$W_{1324}=-\frac{2}{\lambda}$.
Now, a straightforward calculation shows that the norm of the Weyl tensor of any such  Lie group  agrees with the norm of the Weyl tensor of the Type II generalized symmetric space defined by \eqref{GENERtype D metric}, which becomes
$
\| W\|^2 =g^{i\alpha}g^{j\beta}g^{k\gamma}g^{\ell\delta}W_{ijk \ell}W_{\alpha\beta\gamma\delta}=\frac{48}{\lambda^2},
$
from where it follows that the conformal factor is necessarily constant, and thus, all such Lie groups $(G,g)$ are homothetic.
\end{remark}

Proceeding in a completely analogous way, all Lie groups $(G,g)$ as in Lemma \ref{thm:algebraic possibilities Type D}--(ii) are also homothetically equivalent.

\begin{theorem}\label{thm:characterization Type D2}
Any four-dimensional Lie group $G$ with a left-invariant metric  $g$ admitting a strictly almost para-K\"{a}hler structure $(g,\J_+)$ and an opposite para-K\"{a}hler structure $(g,\J_-)$ such that the Ricci tensor is
$\J_+$-invariant and such that the minimal polynomial of
the self-dual Weyl curvature operator has a non-zero double root are locally homothetic.
\end{theorem}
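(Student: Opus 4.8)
The plan is to follow the same strategy used in the proof of Theorem \ref{thm:characterization Type D}, replacing the diagonalizability hypothesis by the assumption that the self-dual Weyl curvature operator $W^+$ has minimal polynomial with a non-zero double root. First I would invoke Lemma \ref{thm:algebraic possibilities Type D}: since $(G,g)$ admits a strictly almost para-K\"{a}hler structure $(g,\J_+)$ and an opposite para-K\"{a}hler structure $(g,\J_-)$ with $\J_+$-invariant Ricci tensor, the Weyl operators fall into one of the three listed algebraic types, and the hypothesis on the minimal polynomial of $W^+$ forces us into case (ii). The computer-aided analysis behind that lemma yields, for case (ii), an explicit (finite-parameter) family of Lie brackets; from these I would compute $W^+$, $W^-$ and the Ricci operator explicitly and exhibit the canonical (non-diagonalizable, single Jordan block) form of $W^+$, together with the fact that $W^-$ and $W^+$ have complementary rank, so the full Weyl conformal curvature operator acting on $\Lambda^2$ has \emph{maximal rank} (unless a degeneration to a flat or conformally flat case occurs, which can be excluded since the double root is assumed non-zero).

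Once maximal rank of the Weyl operator is established, Lemma \ref{lemma-Hall} applies: any metric with the same $(1,3)$-Weyl tensor is locally conformally equivalent to $g$. Comparing two Lie groups $(G_1,g_1)$ and $(G_2,g_2)$ both falling in case (ii), I would first normalize a scalar invariant built from the curvature — for instance the scalar curvature $\tau$, or (as in Remark \ref{remark111}) the norm $\|W\|^2$ of the Weyl tensor — so that the two $(1,3)$-Weyl tensors coincide, hence the metrics are conformally related, $g_2 = e^{2\sigma}g_1$ locally. The argument of Theorem \ref{thm:characterization Type D} then transfers verbatim: the conformal rescaling $\overline{E}^\pm_i = e^{2\sigma}E^\pm_i$ does not change the one-dimensional distinguished eigenbundles of $W^\pm$, hence does not change the induced almost para-complex structures $\J_\pm$; but $(g_1,\J_-)$ and $(g_2,\J_-)=(e^{2\sigma}g_1,\J_-)$ are both para-K\"{a}hler (equivalently $\nabla\Omega_-=0$), and a parallel $2$-form cannot survive a non-constant conformal change, so $\sigma$ is constant and $g_1$, $g_2$ are homothetic.

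The main obstacle is the bookkeeping in case (ii) of Lemma \ref{thm:algebraic possibilities Type D}: unlike case (i), where the solution set of the Jacobi identities \eqref{equation-3} collapses to essentially one model, case (ii) a priori leaves a genuine parameter family, so one must verify that \emph{all} of them produce the same $(1,3)$-Weyl tensor once a single scalar invariant is fixed — i.e. that the remaining parameters are pure gauge. Concretely I would show that the non-zero components of the $(0,4)$-Weyl tensor are determined by $\tau$ alone (mirroring the computation $W_{1234}=W_{1423}=-1/\lambda$, $W_{1324}=-2/\lambda$ in Remark \ref{remark111}, now with the extra off-diagonal entry forced by the Jordan block), so that $\|W\|^2$ is a fixed multiple of $\tau^2$ and the conformal factor relating any two such Lie groups is forced to be constant. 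A secondary point to check is that the non-zero double root hypothesis genuinely rules out the degenerate sub-loci where $W^+$ becomes diagonalizable or nilpotent (cases (i) and (iii)), so that the dichotomy is clean; this is the analogue of the observation in Remark \ref{re:remark}-(2) that diagonalizability of $W^+$ there forces flatness. Granting these verifications — which are of the same "long but straightforward" nature as those elsewhere in the paper — the homothety conclusion follows, and in fact one obtains that every such Lie group is locally homothetic to a single explicit model.
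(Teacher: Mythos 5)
Your proposal follows essentially the same route as the paper's proof: reduce to case (ii) of Lemma \ref{thm:algebraic possibilities Type D}, observe that $W^-=\frac{\tau}{12}\operatorname{diag}[-1,2,-1]$ (from the opposite para-K\"ahler structure) while $W^+$ has eigenvalues $\frac{\tau}{6}$ and $-\frac{\tau}{12}$ with the latter a double root of the minimal polynomial, conclude that the Weyl operator has maximal rank precisely because the double root (equivalently $\tau$) is non-zero, apply Hall's Lemma \ref{lemma-Hall} to get local conformal equivalence, and then force the conformal factor to be constant via the parallel opposite para-K\"ahler form exactly as in Theorem \ref{thm:characterization Type D} (or via the normalization $\|W\|^2=\frac{1}{12}\tau^2$, which the paper records in the subsequent remark). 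The extra verifications you flag are of the same computer-aided, case-(ii)-bookkeeping nature the paper itself relies on, so the argument is correct and matches the paper's.
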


\begin{proof}
Being an opposite para-K\"ahler manifold, the anti-self-dual Weyl curvature operator is diagonalizable of the form $W^- = \frac{\tau}{12}\operatorname{diag}[-1,2,-1]$. Moreover, a straightforward calculation shows that the self-dual Weyl curvature operator of any Lie group corresponding to Lemma \ref{thm:algebraic possibilities Type D}--(ii) has the same eigenvalue structure that $W^-$, i.e.  $W^+$ has eigenvalues $\frac{\tau}{6}$ and $-\frac{\tau}{12}$, the latter with multiplicity two being a double root of the minimal polynomial.
Hence, the Weyl curvature operator is of maximal rank unless the scalar curvature vanishes (in which case the manifold is Ricci flat and self-dual). Then Lemma \ref{lemma-Hall} shows that any two
such Lie groups are  conformally equivalent and the proof follows as in the previous theorem.
\end{proof}

\begin{remark}\rm
Considering the Weyl curvature operator of any Lie group as in Lemma \ref{thm:algebraic possibilities Type D}--(ii), it decomposes as $W=W^++W^-$ and hence $\| W\|^2=\| W^+\|^2 + \| W^-\|^2$. Now, since the Lie group admits an opposite para-K\"ahler structure, $\| W^-\|^2=\frac{1}{24}\tau^2$. Moreover, since the self-dual Weyl curvature operator has the same eigenvalues as $W^-$ but with opposite sign and it has a double root of its minimal polynomial, a straightforward calculation shows that $\| W^+\|^2=\frac{1}{24}\tau^2$. Hence $\| W\|^2=\frac{1}{12}\tau^2\neq 0$. Hence, any two conformally related such metrics must be homothetic.
\end{remark}

%\begin{remark}\rm
%Recall that four-dimensional manifolds $(M,g,\J_+,\J_-)$ equipped with an almost para-K\"{a}hler and opposite para-K\"{a}hler structure whose Ricci operator commutes with {\blue $\J_+$} were constructed in \cite{Calvino-Garcia-Vazquez-Vazquez3}. Moreover we showed that the metric may be specialized so that the selfdual and anti-selfdual Weyl curvature operators diagonalize. In this case all the eigenvalues are completely determined by the scalar curvature.
%Now, an application of Lemma \ref{lemma-Hall} as in the previous theorems \ref{thm:characterization Type D} and \ref{thm:characterization Type D2} shows that all such examples are homothetic provided that the scalar curvature to be constant.
%\end{remark}

The remaining class of almost para-K\"ahler and opposite para-K\"ahler structures at Lemma \ref{thm:algebraic possibilities Type D}-(iii) are self-dual and Einstein (indeed Ricci flat). Hence they are Osserman manifolds and the corresponding Jacobi operators are two-step nilpotent. Since any para-K\"{a}hler metric is a Walker one, it follows  (see for example \cite{Brozos-Garcia-Gilkey-Nikevic-Vazquez09} for the discussion of Ricci flat self-dual Walker metrics) that $(G,g)$ is locally a Riemannian extension \eqref{eq:Riemannianextension} of a flat affine surface $(\affman,\affcon)$. Moreover, in such a case the Walker structure is strict (i.e.,  the parallel degenerate distribution is spanned by two parallel null vector fields), and one has the following:

\begin{theorem}
Any four-dimensional Lie group $G$ with a left-invariant metric  $g$ admitting a strictly almost para-K\"{a}hler structure $(g,\J_+)$ and an opposite para-K\"{a}hler structure $(g,\J_-)$ such that the Ricci tensor is $\J_+$-invariant and the self-dual Weyl curvature operator vanishes is locally a Riemannian extension $(T^*\affman,g_{\affcon,\Phi})$ of a flat affine surface $(\affman,\affcon)$. Hence, there exist coordinates $(x_1,x_2,y_1,y_2)$ where the metric expresses as
$$
g=2 \,dx_i\circ dy_i + f(x_1,x_2) dy_2\circ dy_2
$$
for some function $f(x_1,x_2)$ defined on $\affman$.
\end{theorem}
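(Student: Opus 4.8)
The plan is to combine the structural information coming from Lemma~\ref{thm:algebraic possibilities Type D}-(iii) with the general theory of Ricci flat self-dual Walker metrics. By the discussion preceding the statement, the hypotheses force $(G,g)$ to be self-dual, Einstein, and in fact Ricci flat, while the opposite para-K\"ahler structure $\J_-$ endows $(G,g)$ with an underlying Walker structure: the $\pm 1$-eigendistributions $L^\pm$ of $\J_-$ are null and parallel. The first step is to upgrade this to a \emph{strict} Walker structure, i.e.\ to produce two (locally defined) parallel null vector fields spanning the degenerate distribution. This is where I would invoke the classification of Ricci flat self-dual Walker four-manifolds (cf.\ \cite{Brozos-Garcia-Gilkey-Nikevic-Vazquez09, Garcia-Gilkey-Nikevic-Vazquez13}): such metrics are, up to the relevant modifications, Riemannian extensions. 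The point to check is that the defining $(0,2)$-tensor $\Phi$ and the affine connection $\affcon$ can be chosen so that the extension is \emph{exactly} of the form \eqref{eq:Riemannianextension} with no deformation term surviving, which is guaranteed precisely by the strictness of the Walker structure together with Ricci flatness and self-duality.

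The second step is to identify the affine base $(\affman,\affcon)$. Once $(G,g)$ is written as a Riemannian extension $(T^*\affman, g_{\affcon,\Phi})$, its self-dual Weyl operator is controlled by the curvature of $\affcon$; since $W^+=0$ by hypothesis, the affine connection $\affcon$ must be projectively flat, and combined with the Ricci flatness of the total space one concludes that $\affcon$ is actually flat. On a flat affine surface one may choose affine coordinates $(x_1,x_2)$ in which all Christoffel symbols vanish, so that \eqref{eq:Riemannianextension} collapses to
\[
g_{\affcon,\Phi}=\left(\begin{array}{cc}\Phi_{ij}& \id_2\\[0.05in]\id_2 & 0\end{array}\right),
\]
with $\Phi=\Phi_{ij}(x_1,x_2)\,dx^i\otimes dx^j$ an arbitrary symmetric tensor on $\affman$. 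Writing $y_i$ for the fibre coordinates dual to $x_i$, this reads $g=2\,dx_i\circ dy_i+\Phi_{ij}\,dx^i\circ dx^j$, so it remains to normalise $\Phi$.

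The third step is the normalisation $\Phi\rightsquigarrow f(x_1,x_2)\,dy_2\circ dy_2$ — note the roles of the $x$'s and $y$'s get swapped relative to the display above once one relabels, so the final metric is $g=2\,dx_i\circ dy_i+f(x_1,x_2)\,dy_2\circ dy_2$. This is achieved by an affine change of the base coordinates together with a fibre-preserving change adapted to it (a gauge transformation $x_{i'}\mapsto x_{i'}+\partial_i\psi$ of the Riemannian extension, which alters $\Phi$ by a Hessian $\operatorname{Hess}\psi$ and does not change the isometry type): one uses these freedoms to kill the off-diagonal entry $\Phi_{12}$ and one of the diagonal entries, absorbing what remains into a single function $f$. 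Checking that this is always possible for a symmetric $2\times2$ tensor on a flat affine surface is a short linear-algebra-plus-ODE argument. I expect the main obstacle to be the first step, namely rigorously extracting the \emph{strict} Walker normal form (and with it the precise Riemannian extension form \eqref{eq:Riemannianextension}) from the abstract hypotheses, since this is where one must lean on the external classification results and verify that the deformation terms allowed in the general Walker setting are absent here; the coordinate normalisations in steps two and three are routine once that structure is in hand.
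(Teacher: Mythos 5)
Your proposal follows essentially the same route as the paper, which proves this theorem entirely in the short paragraph preceding its statement: reduce to case (iii) of Lemma \ref{thm:algebraic possibilities Type D} to obtain a Ricci-flat, self-dual metric, use the opposite para-K\"{a}hler structure $\J_-$ to get an underlying (strict) Walker structure, and invoke the classification of Ricci-flat self-dual Walker metrics from \cite{Brozos-Garcia-Gilkey-Nikevic-Vazquez09} to conclude that the metric is a Riemannian extension of a flat affine surface. Your second and third steps simply fill in the base-identification and $\Phi$-normalisation details that the paper delegates wholesale to the cited reference.
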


\subsection{Characterizing Type III generalized symmetric spaces}
Finally we consider the third class of four-dimensional generalized symmetric spaces from the point of view of Theorem \ref{GENERth:d1}, which shows that the constant $\lambda$ plays an important role, distinguishing the locally conformally flat examples.

\smallskip

The case $\lambda\neq 0$ is settled as follows:

\begin{theorem}\label{theorema:1} Let $(\man,g)$ be a four-dimensional non-symmetric homogeneous space of neutral signature. Then $(\man,g)$ is locally isometric to the generalized symmetric space of Type III if and only if $(\man,g)$ is strictly conformally symmetric with negative  semidefinite  Ricci tensor and rank $2$ Ricci operator.
\end{theorem}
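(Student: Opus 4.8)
The plan is to prove both implications, the ``only if'' direction being an immediate consequence of Theorem \ref{GENERth:d1} together with the explicit Ricci operator of the Type III model. Indeed, from \eqref{GENERtype B metric} a direct computation gives $Ric = -\frac{3}{\lambda}\operatorname{diag}[0,0,1,1]$ with respect to the orthonormal basis $\{e_1,e_2,e_3,e_4\}$ exhibited in Section \ref{section:2.3} (the last two vectors being timelike), so the Ricci tensor is negative semidefinite of rank $2$; and strict conformal symmetry (i.e. $\nabla W = 0$ but $W\neq 0$) follows from Theorem \ref{GENERth:d1} together with the fact that $W^+$ is two-step nilpotent and non-zero precisely when $\lambda\neq 0$. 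The substance of the theorem is the converse.

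For the ``if'' direction I would first exploit the algebraic structure forced by strict conformal symmetry in neutral signature. Since $W\neq 0$ is parallel, $W^+$ and $W^-$ are separately parallel, and one of them must be non-zero; the nilpotency of $W^+$ in the model suggests the key step is to show that a strictly conformally symmetric neutral four-manifold with rank $2$ negative semidefinite Ricci tensor must have one of $W^\pm$ two-step nilpotent (the other vanishing). I would argue as follows: the condition $\nabla W=0$ together with the second Bianchi identity constrains the Cotton tensor and hence relates $\nabla\rho_0$ to $W$; combined with homogeneity (so that curvature invariants are constant and the manifold carries a transitive isometry group) and the prescribed rank/signature of $Ric$, one classifies the possible eigenvalue structures of $W^\pm$. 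The negative semidefiniteness and rank exactly $2$ of the Ricci operator should rule out the diagonalizable and ``double root'' cases, leaving only the nilpotent one, which moreover forces the kernel of the Ricci operator to be a null parallel distribution --- a Walker structure. At that point the metric can be put in Walker form, and matching it against the classification of neutral homogeneous strictly conformally symmetric spaces (or directly integrating the homogeneity equations) identifies it with \eqref{GENERtype B metric} for the appropriate non-zero value of $\lambda$.

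An alternative and probably cleaner route, parallel to the proofs of Theorems \ref{thm:characterization Type D} and \ref{thm:characterization Type D2}, is to invoke Lemma \ref{lemma-Hall} once one knows that $W$ has maximal rank: if $(\man,g)$ satisfies the hypotheses, then $(\man,g)$ is conformally equivalent to the Type III model as soon as their $(1,3)$-Weyl tensors can be matched up to a frame change, and then the rank-$2$ negative semidefinite Ricci condition pins down the conformal factor. The delicate point here is that for Type III $W^+$ is two-step nilpotent, so $W$ does \emph{not} have maximal rank, and Lemma \ref{lemma-Hall} does not apply directly; one would instead need a nilpotent analogue, using that $\operatorname{Im}W$ is a well-defined parallel distribution and tracking how it transforms under conformal change. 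So the main obstacle --- in either approach --- is the step that goes from the purely local conformal/curvature data to the global isometry type: showing that strict conformal symmetry plus the Ricci constraints, in the homogeneous neutral setting, leaves no room besides the Type III family. I expect this to reduce, after choosing adapted (Walker) coordinates, to integrating the homogeneity and $\nabla W=0$ equations and recognizing \eqref{GENERtype B metric}, but verifying that no other branch survives is where the real work lies.
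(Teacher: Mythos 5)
Your proposal does not close the hard direction, and the paper's proof rests on an external input you never identify: Derdzinski's theorem \cite{Derdzinski78} that there is a \emph{unique} homogeneous conformally symmetric pseudo-Riemannian space whose Ricci operator has rank $2$ and whose Ricci tensor is negative semidefinite. Once the Type III space is shown to be one such space --- which is the content of the paper's computation, combined with the Derdzinski--Roter structural identity $FW=-\tfrac12\,\rho\odot\rho$ from \cite{Derdzinski-Roter80}, which also guarantees $\operatorname{rank}(Ric)\le 2$ for \emph{any} strictly conformally symmetric manifold --- the converse is immediate. Both routes you sketch stall exactly where you admit they do: the eigenvalue-classification-plus-Walker-integration argument is never carried out, and you yourself observe that Lemma \ref{lemma-Hall} is inapplicable because $W^-=0$ and $W^+$ is two-step nilpotent, so $W$ is far from maximal rank. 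Acknowledging ``where the real work lies'' is not the same as doing it; as written, the ``if'' direction is a plan, not a proof, and the missing idea is precisely the appeal to Derdzinski's uniqueness result.

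There is also a concrete error in your ``only if'' direction. The Ricci operator of the Type III metric \eqref{GENERtype B metric} is \emph{not} $-\frac{3}{\lambda}\operatorname{diag}[0,0,1,1]$: it is two-step nilpotent (the paper exhibits it explicitly with off-diagonal entries $-\frac43 e^x$ and $-\frac43 e^y$), it is independent of $\lambda$, and it has no nonzero eigenvalues; a diagonal Ricci operator of the kind you write down occurs for Types I and II, not Type III. What is true, and what the statement actually requires, is that the Ricci \emph{tensor} $\rho$ is negative semidefinite of rank $2$; this is read off from the explicit $(0,2)$-form of $\rho$, not from a diagonalization of the operator $Ric$.
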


\begin{proof}
Recall from Theorem \ref{GENERth:d1} that any Type III generalized symmetric space with $\lambda\neq 0$ is conformally symmetric. It was shown in \cite{Derdzinski-Roter80} that the Ricci operator of any strictly conformally symmetric manifold (i.e.,  $\nabla W=0$ but $W\neq 0$ and $\nabla R\neq 0$) satisfies $\operatorname{rank}(Ric)\leq 2$. Moreover, the Weyl curvature tensor is completely determined by the Ricci tensor  as $FW=-\frac{1}{2}\rho\odot\rho$, where $\odot$ is the Kulkarni-Nomizu product and $F$ is a function which vanishes at those points where $\operatorname{rank}(Ric)\leq 1$.

Now, it follows from  \eqref{GENERtype B metric} that the Ricci operator, $Ric$, and the Ricci tensor, $\rho$, are given by
\[
Ric=\left(\begin{array}{cccc}
0&0&0&0\\
0&0&0&0\\
0&-\frac{4}{3}e^x&0&0\\
-\frac{4}{3}e^y&0&0&0
\end{array}\right),
\quad
\rho=\left(\begin{array}{cccc}
-\frac{4}{3}&-\frac{2}{3}&0&0\\
-\frac{2}{3}&-\frac{4}{3}&0&0\\
0&0&0&0\\
0&0&0&0
\end{array}\right),
\]
which shows that $Ric$ is two-step nilpotent and that $\rho$ is negative semidefinite.
Moreover,  a straightforward calculation  from \eqref{GENERtype B metric} shows that the Weyl tensor is given by  $W=\frac{3\lambda}{16}\rho\odot\rho$, and hence
$FW=-\frac{1}{2}\rho\odot\rho$ for the function $F$ given by $F=-\frac{8}{3\lambda}$. This shows that the Ricci operator is of rank $2$.

Finally, note that Derdzinski showed in \cite{Derdzinski78} the existence of a unique conformally symmetric homogeneous space whose Ricci operator has rank $2$ and whose corresponding Ricci tensor is  negative semidefinite, and this ends the proof.
\end{proof}

It follows from the work in \cite{Derdzinski78} that the metric of any homogeneous conformally symmetric as in Theorem \ref{theorema:1}  is locally isometric to a Riemannian extension $(T^*\Sigma, g_{D,\Phi})$, where $(\Sigma, D)$  is a projectively flat affine surface and $\Phi$ is a symmetric $(0,2)$-tensor field on $\Sigma$. We summarize this observation in the following

\begin{theorem}{\rm\cite{Derdzinski-Roter97}}\label{th.esteb}
Any conformally symmetric generalized symmetric space of Type III is locally isometric to the   Riemannian extension $(T^\ast\Sigma, g_{D,\Phi})$ where $(\Sigma,D)$ is a projectively flat surface given by the Christoffel symbols
 $$
\Gamma_{11}^1=-e^{-\frac{1}{3}\ln 4},
\quad
\Gamma_{12}^2=\Gamma_{21}^2=e^{-\frac{1}{3}\ln 4},
\quad
\Gamma_{22}^1=e^{-\frac{1}{3}\ln 4},
$$
 and $\Phi$ is a $(0,2)$-tensor on $\Sigma$ given by $\Phi=-\frac{3}{16}\lambda e^{\frac{7}{6}\ln 4}\operatorname{Id}$.

\end{theorem}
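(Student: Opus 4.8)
The plan is to combine Theorem~\ref{theorema:1} with the structure theory of strictly conformally symmetric homogeneous spaces due to Derdzi\'nski and Roter, applied concretely to the explicit metric~\eqref{GENERtype B metric}. By Theorem~\ref{GENERth:d1} a Type~III generalized symmetric space with $\lambda\neq 0$ is strictly conformally symmetric (it is not locally conformally flat, and one checks $\nabla R\neq 0$ since the space is non-symmetric), and by the computation in the proof of Theorem~\ref{theorema:1} its Ricci operator is two-step nilpotent of rank~$2$ with negative semidefinite Ricci tensor. The results in \cite{Derdzinski-Roter97} (see also \cite{Derdzinski78}) then guarantee that such a space is locally isometric to a Riemannian extension $(T^\ast\Sigma, g_{D,\Phi})$ of a projectively flat affine surface $(\Sigma,D)$, for a suitable symmetric $(0,2)$-tensor field $\Phi$ on $\Sigma$. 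What remains is to identify $(\Sigma,D)$ and $\Phi$ explicitly, matching the given metric~\eqref{GENERtype B metric}.

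The key steps are as follows. First I would recall the normal form~\eqref{eq:Riemannianextension} for a Riemannian extension and the fact that $\D=\ker\pi_*$ is a null parallel distribution; for~\eqref{GENERtype B metric} the degenerate distribution is spanned by $\partial_u,\partial_v$, which plays the role of the fibre directions, so the surface coordinates should be identified with (functions of) $x,y$ and the fibre coordinates with $u,v$. Second, I would read off the Christoffel symbols of the base connection $D$ by comparing the $(i,j)$-block $-2x_{k'}\Gamma_{ij}^k+\Phi_{ij}$ of~\eqref{eq:Riemannianextension} with the corresponding block of~\eqref{GENERtype B metric}: the terms linear in the fibre coordinates give the $\Gamma_{ij}^k$, and the remaining (fibre-independent) part gives $\Phi_{ij}$. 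The exponential factors $e^{-x}$, $e^{-y}$ in~\eqref{GENERtype B metric} force a change of fibre coordinates (an affine change in each fibre, allowed since it preserves the Riemannian-extension form) and a corresponding coordinate change on the base, after which the Christoffel symbols take the claimed constant-looking values $\Gamma_{11}^1=-e^{-\frac13\ln 4}$, $\Gamma_{12}^2=\Gamma_{21}^2=\Gamma_{22}^1=e^{-\frac13\ln 4}$; the peculiar constant $e^{-\frac13\ln 4}=4^{-1/3}$ is exactly what is needed to normalise the scalar curvature of the Type~III model to its standard value. Third, I would verify that this connection is projectively flat (its projective Weyl tensor vanishes in dimension two automatically, so one only needs the Ricci tensor of $D$ to be symmetric, equivalently $D$ to be locally equiaffine up to projective change, which is immediate from the explicit $\Gamma$'s). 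Fourth, tracking the fibre-independent remainder through the same coordinate change yields $\Phi=-\frac{3}{16}\lambda\,e^{\frac{7}{6}\ln 4}\,\id$, the factor $\lambda$ entering linearly as in~\eqref{GENERtype B metric} and the power of~$4$ being the Jacobian-type contribution of the normalising rescaling.

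The main obstacle I expect is bookkeeping: pinning down the precise coordinate change (on both base and fibre) that simultaneously (i) absorbs the exponentials $e^{-x},e^{-y}$ into constant Christoffel symbols, (ii) keeps the off-block $\id_2$ in~\eqref{eq:Riemannianextension} intact, and (iii) produces the stated normalising constants $4^{-1/3}$ and $4^{7/6}$. Once the change of variables is guessed correctly this is a routine but delicate verification; the conceptual content —that a strictly conformally symmetric homogeneous neutral-signature four-manifold with rank-$2$ nilpotent Ricci operator must be such a Riemannian extension— is already supplied by \cite{Derdzinski78, Derdzinski-Roter97}, so the theorem follows by combining that classification with the explicit identification above.
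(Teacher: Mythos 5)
Your strategy coincides with the paper's: the paper offers no computation for this theorem at all, deriving it in a single sentence from Theorem~\ref{theorema:1} together with the Derdzinski(--Roter) classification of homogeneous strictly conformally symmetric manifolds with rank-two Ricci operator, which is exactly your first paragraph. Your further outline of how to extract the explicit $\Gamma_{ij}^k$ and $\Phi$ by a base-and-fibre coordinate change is reasonable and goes beyond what the paper records; the only caution is that in dimension two projective flatness is a Codazzi-type condition on the Ricci tensor of $D$, not merely its symmetry, so that step needs a direct verification for the stated connection.
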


Recall that a pseudo-Riemannian manifold is said to be \emph{semi-symmetric} if the curvature tensor at each point is that of a symmetric space, possibly changing from point to point. This condition is equivalent to $R(X,Y)\cdot R=0$, where $R(X,Y)$ acts on $R$ as a derivation, and a
direct calculation shows that a  Type III generalized symmetric space is semi-symmetric if and only if it is locally conformally flat. That is the reason why the treatment of the case $\lambda=0$  is completely different to the case  $\lambda\neq 0$ studied above.

\begin{theorem}\label{th:esteb2}
Any  locally conformally flat generalized symmetric space of Type III is the Riemannian  extension $(T^\ast\Sigma,g_\affcon)$ of an affine surface $(\Sigma,\affcon)$, where $\affcon$ is a projectively flat affine connection with symmetric and non-degenerate Ricci tensor.
\end{theorem}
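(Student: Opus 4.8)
The plan is to make everything explicit and to read off the Riemannian-extension structure from a single well-chosen change of variables. By Theorem~\ref{GENERth:d1} a generalized symmetric space of Type III is locally conformally flat exactly when $\lambda=0$, so I start from the metric \eqref{GENERtype B metric} with $\lambda=0$,
\[
g=e^{-x}(dx+2\,dy)\circ du+e^{-y}(2\,dx+dy)\circ dv .
\]
This metric is purely off-diagonal; in particular $\mathcal D=\operatorname{span}\{\partial_u,\partial_v\}$ is a null plane field, and a short computation of the Levi-Civita connection shows $\nabla_{X}Z\in\Gamma\mathcal D$ for all $X$ and all $Z\in\Gamma\mathcal D$, so $(\man,g)$ is a Walker manifold. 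The aim is therefore to exhibit Walker coordinates in which the base block of $g$ is \emph{linear}, with no constant term, in the fibre coordinates, which is precisely the canonical form \eqref{eq:Riemannianextension} with $\Phi=0$.

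The decisive step --- and the only non-routine one --- is the substitution. I would pass to the fibre coordinates $p=e^{-x}u$, $q=e^{-y}v$ (this trades the exponential factors for a base block quadratic in $x,y$) and to the base coordinates $s=x+2y$, $t=2x+y$; together this is a global diffeomorphism of $\mathbb R^{4}$. Using $e^{-x}\,du=dp+p\,dx$, $e^{-y}\,dv=dq+q\,dy$ and $dx=\tfrac13(2\,dt-ds)$, $dy=\tfrac13(2\,ds-dt)$, a routine computation gives, after all cancellations,
\[
g=ds\circ dp+dt\circ dq-\tfrac{p}{3}\,ds\circ ds+\tfrac{2(p+q)}{3}\,ds\circ dt-\tfrac{q}{3}\,dt\circ dt .
\]
Comparing with \eqref{eq:Riemannianextension}, with base coordinates $(x^{1},x^{2})=(s,t)$ and fibre coordinates $(x_{1'},x_{2'})=(\tfrac12 p,\tfrac12 q)$, one reads off $\Phi=0$ and identifies $g=g_{\affcon}$ as the Riemannian extension of the affine surface $(\Sigma,\affcon)$, $\Sigma=\mathbb R^{2}$, whose connection has the \emph{constant} Christoffel symbols
\[
\Gamma^{1}_{11}=\Gamma^{2}_{22}=\tfrac13,\qquad
\Gamma^{1}_{12}=\Gamma^{1}_{21}=\Gamma^{2}_{12}=\Gamma^{2}_{21}=-\tfrac13 ,
\]
all others vanishing. (After an affine change of coordinates on $\Sigma$ this is, up to normalization, the connection appearing in Theorem~\ref{th.esteb} at $\lambda=0$.)

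It remains to verify the two properties of $\affcon$. Since $\Gamma^{k}_{ij}$ is constant and $\sum_{i}\Gamma^{i}_{ij}=0$, the coordinate volume element of $\Sigma$ is $\affcon$-parallel, so the affine Ricci tensor $\rho^{\affcon}$ is symmetric; moreover the Ricci tensor reduces to $\rho^{\affcon}_{jk}=-\Gamma^{i}_{km}\Gamma^{m}_{ij}$ and a short direct computation gives $\rho^{\affcon}_{11}=\rho^{\affcon}_{22}=-\tfrac29$, $\rho^{\affcon}_{12}=\rho^{\affcon}_{21}=\tfrac19$, hence $\rho^{\affcon}$ is non-degenerate ($\det\rho^{\affcon}=\tfrac1{27}$). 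Finally, in dimension two the projective Weyl tensor vanishes identically, so $\affcon$ is projectively flat if and only if its projective Cotton tensor $(\affcon_{X}\rho^{\affcon})(Y,Z)-(\affcon_{Y}\rho^{\affcon})(X,Z)$ vanishes; evaluating this with the $\Gamma^{k}_{ij}$ and $\rho^{\affcon}$ above shows that it does. (Alternatively, projective flatness of $\affcon$ follows at once from the fact that $g=g_{\affcon}$ is locally conformally flat, through the general relation between the Weyl curvature of a Riemannian extension and the projective curvature of its base connection.) Since the construction is through explicit global coordinates, the identification with $(T^{*}\Sigma,g_{\affcon})$ in fact holds on the whole model space.

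I expect the sole genuine obstacle to be locating the substitution $s=x+2y$, $t=2x+y$, $p=e^{-x}u$, $q=e^{-y}v$: the exponential factors and the non-closed one-forms $e^{-x}(dx+2\,dy)$, $e^{-y}(2\,dx+dy)$ block the naive reading of Walker coordinates, and one has to recognize that the move to $p,q$ is what converts them into a base block linear in the new fibre variables, after which the linear change to $s,t$ is merely cosmetic.
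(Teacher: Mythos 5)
Your proof is correct, and your coordinate computation checks out: with $p=e^{-x}u$, $q=e^{-y}v$, $s=x+2y$, $t=2x+y$ the metric does become $ds\circ dp+dt\circ dq-\tfrac{p}{3}ds\circ ds+\tfrac{2(p+q)}{3}ds\circ dt-\tfrac{q}{3}dt\circ dt$, which is the canonical form \eqref{eq:Riemannianextension} with $\Phi=0$ and the constant Christoffel symbols you list, and your verifications of symmetry, non-degeneracy ($\det\rho^{\affcon}=\tfrac1{27}$) and projective flatness of $\affcon$ are all accurate. However, your route is genuinely different from the paper's. The paper never produces Walker coordinates or the base connection explicitly: it observes that $\D=\operatorname{Span}\{\partial_u,\partial_v\}$ is null and parallel, invokes Afifi's theorem to conclude that a locally conformally flat Walker metric of this kind is the Riemannian extension of a \emph{projectively flat} affine surface, then deduces symmetry of the affine Ricci tensor from the commutation $R(a,b)R(c,d)=R(c,d)R(a,b)$ of curvature operators (via \cite{Brozos-Garcia-Gilkey-Vazquez07}) and non-degeneracy from the failure of the Ivanov--Petrova condition (via \cite{Calvino-Garcia-Vazquez10}); in other words, every property of $\affcon$ is read off from curvature invariants of the four-dimensional metric rather than from $\affcon$ itself. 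Your approach buys an explicit model for $(\Sigma,\affcon)$ (a flat-coordinate connection with constant Christoffels), which the paper only supplies for the $\lambda\neq0$ case in Theorem \ref{th.esteb}; the paper's approach avoids the hunt for the right substitution and situates the result within the general dictionary between Riemannian extensions and affine surfaces. One small caution: your parenthetical identification of your connection with the one of Theorem \ref{th.esteb} ``at $\lambda=0$'' is an unverified aside (that theorem concerns the conformally symmetric, Derdzinski--Roter situation, and the two sets of Christoffel symbols are not obviously affinely equivalent), so either check it or drop it; it is not needed for the proof.
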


\begin{proof}
A direct   calculation shows that the distribution  $\D=\operatorname{Span}\{\partial_u,\partial_v\}$ in the generalized symmetric space of Type III is null and parallel, and therefore this space is a Walker manifold \cite{Brozos-Garcia-Gilkey-Nikevic-Vazquez09}. Moreover,    it follows from  \cite{Afifi54}  that this space is a Riemannian  extension of a surface $\Sigma$ equipped with a projectively flat affine connection $\affcon$. Now, a long but straightforward calculation  shows that $R(a,b) R(c,d)= R(c,d) R(a,b)$ for any vector fields $a$, $b$, $c$, $d$, and thus the Ricci tensor of the affine surface $\Sigma$ is symmetric \cite{Brozos-Garcia-Gilkey-Vazquez07}. Finally, it is easy to show that $(T^\ast\Sigma,g_\affcon)$ is not Ivanov-Petrova and hence the Ricci tensor of the affine surface $(\Sigma,D)$ is not degenerate (we refer to \cite{Calvino-Garcia-Vazquez10} for more details).
\end{proof}

\section{Appendix}

Because of some incorrectness we have encountered in recent studies investigating the geometry of four-dimensional generalized symmetric spaces, in what follows we give an explicit description of the Levi-Civita connection and the $(0,4)$-curvature tensor corresponding to the  different generalized symmetric spaces.

\subsection*{Type I}
It follows from Theorem \ref{GENteorema de Fino} and Remark \ref{re:remarkI} that any
Type I generalized symmetric space is locally isometric to a metric Lie group $(G,g)$  whose Lie algebra is given  by $\mathfrak{g}=\operatorname{Span}\{ e_1,e_2,e_3,e_4\}$ with non-zero brackets
\[
\begin{array}{lll}
{[e_1,e_3]}=e_1+2\alpha e_2,&
{[e_1,e_4]}=\alpha e_1,&
{[e_2,e_3]}=-e_2,\\
\noalign{\medskip}
{[e_2,e_4]}=-2 e_1-\alpha e_2,&
{[e_3,e_4]}=-2\alpha e_3-2 e_4,
\end{array}
\]
and the left-invariant metric which makes $\{ e_1,e_2,e_3,e_4\}$ an orthonormal basis of signature either $(++++)$ or $(--++)$. Both cases are described simultaneously by setting $\eta=+1$ (resp., $\eta=-1$) in the Riemannian (resp., pseudo-Riemannian) case.
Now, a straightforward calculation shows that the Levi-Civita connection of any Type I generalized symmetric space is given (with respect to a suitable orthonormal basis of left-invariant vector fields on $G$) by
$$
{\small
\begin{array}{llll}
\nabla_{e_1}e_1=-\eta(e_3+\alpha e_4)
&
\nabla_{e_1}e_2=\eta(e_4-\alpha e_3)
&
\nabla_{e_1}e_3=e_1+\alpha e_2
&
\nabla_{e_1}e_4=\alpha e_1-e_2
\\
\noalign{\medskip}
\nabla_{e_2}e_1=\eta(e_4-\alpha e_3)
&
\nabla_{e_2}e_2=\eta(e_3+\alpha e_4)
&
\nabla_{e_2}e_3=\alpha e_1-e_2
&
\nabla_{e_2}e_4=-e_1-\alpha e_2
\\
\noalign{\medskip}
\nabla_{e_3}e_1=-\alpha e_2
&
\nabla_{e_3}e_2=\alpha e_1
&
\nabla_{e_3}e_3=2\alpha e_4
&
\nabla_{e_3}e_4=-2\alpha e_3
\\
\noalign{\medskip}
\nabla_{e_4}e_1= - e_2
&
\nabla_{e_4}e_2= e_1
&
\nabla_{e_4}e_3= 2 e_4
&
\nabla_{e_4}e_4=-2e_3
\end{array}
 }
$$
and the corresponding curvature tensor is determined (up to the usual symmetries) by the non-zero components
$$
\begin{array}{l}
R_{1212}=\eta R_{1234}=\frac{1}{2} R_{3443}=2(\alpha^2+1),\\
\noalign{\medskip}
R_{1331}=R_{1324}=R_{1441}=R_{1432}=R_{2332}=R_{2442}=\eta(\alpha^2+1).
\end{array}
$$
It immediately follows that the Ricci operator is diagonal on the basis $\{ e_1,\dots,e_4\}$ of the form $\operatorname{Ric}=-6 (\alpha^2+1)\operatorname{diag}[0,0,1,1]$, and the scalar curvature  $\tau=-12 (\alpha^2+1)$.

% Let $(x,y,u,v)$ be coordinates in $\mathbb{R}^4$ so that the metric is given by \eqref{GENERtype A metric}. Then the Christoffel symbols of the Levi-Civita connection are given by
% $$
% \begin{array}{lll}
% \Gamma_{xx}^x=-\frac{x(y^2+1)}{1+x^2+y^2},&
% \Gamma_{xx}^y=-\frac{y(y^2+1)}{1+x^2+y^2},&
% \Gamma_{xy}^x=\frac{x^2y)}{1+x^2+y^2},
% \\
%\noalign{\medskip}
% \Gamma_{xy}^y=\frac{x^2y)}{1+x^2+y^2},&
% \Gamma_{xu}^u=-\frac{y^2+1}{2\sqrt{1+x^2+y^2}},&
% \Gamma_{xu}^v=\frac{1}{2}y(\frac{x}{\sqrt{1+x^2+y^2}}-1),
% \\
%\noalign{\medskip}
% \Gamma_{xv}^u=\frac{1}{2}y(\frac{x}{\sqrt{1+x^2+y^2}}+1),&
% \Gamma_{xv}^v=\frac{y^2+1}{2\sqrt{1+x^2+y^2}},&
%\Gamma_{yy}^x=-\frac{x(1+x^2)}{1+x^2+y^2},
%\\
%\noalign{\medskip}
%\Gamma_{yy}^y=-\frac{y(1+x^2)}{1+x^2+y^2},&
%\Gamma_{yu}^u=\frac{xy}{2\sqrt{1+x^2+y^2}},&
%\Gamma_{yu}^v=-\frac{1-x(x-\sqrt{1+x^2+y^2})}{2\sqrt{1+x^2+y^2}},
%\\
%\noalign{\medskip}
%\Gamma_{yv}^u=-\frac{1+x(x+\sqrt{1+x^2+y^2})}{2\sqrt{1+x^2+y^2}},&
%\Gamma_{yv}^v=-\frac{xy}{2\sqrt{1+x^2+y^2}},&
%\Gamma_{uu}^x=-\frac{x^2+1-x\sqrt{1+x^2+y^2}}{2\lambda},
%\\
%\noalign{\medskip}
%\Gamma_{uu}^y=-\frac{y(x-\sqrt{1+x^2+y^2})}{2\lambda},&
%\Gamma_{uv}^x=-\frac{xy}{2\lambda},&
%\Gamma_{uv}^y=-\frac{y^2+1}{2\lambda},
%\\
%\noalign{\medskip}
%\Gamma_{vv}^x=\frac{1+x(x+\sqrt{1+x^2+y^2})}{2\lambda},&
%\Gamma_{vv}^y=\frac{y(x+\sqrt{1+x^2+y^2})}{2\lambda},&
%\end{array}
%$$

\subsection*{Type II}
It follows from Theorem \ref{thm:characterization Type D} and Remark \ref{re:remark} that any
Type II generalized symmetric space is locally isometric to a metric Lie group $(G,g)$  whose Lie algebra is given  by $\mathfrak{g}=\operatorname{Span}\{ e_1,e_2,e_3,e_4\}$ with non-zero brackets
\[
\qquad
[e_1,e_2]=- 2\alpha e_3,\quad
[e_1,e_4]=\alpha e_1, \quad
[e_2,e_4]=-2 \alpha e_2,\quad
[e_3,e_4]=-\alpha e_3
\]
and the left-invariant metric which makes $\{ e_1,e_2,e_3,e_4\}$ an orthonormal basis of signature  $(--++)$.
Now, a straightforward calculation shows that the Levi-Civita connection of any Type II generalized symmetric space is given (with respect to a suitable orthonormal basis of left-invariant vector fields on $G$) by
$$
\begin{array}{llll}
\nabla_{e_1}e_1=\alpha e_4
&
\nabla_{e_1}e_2=-\alpha e_3
&
\nabla_{e_1}e_3=-\alpha e_2
&
\nabla_{e_1}e_4=\alpha e_1
\\
\noalign{\medskip}
\nabla_{e_2}e_1=\alpha e_3
&
\nabla_{e_2}e_2=-2\alpha e_4
&
\nabla_{e_2}e_3=\alpha e_1
&
\nabla_{e_2}e_4=-2\alpha e_2
\\
\noalign{\medskip}
\nabla_{e_3}e_1=-\alpha e_2
&
\nabla_{e_3}e_2=\alpha e_1
&
\nabla_{e_3}e_3=\alpha e_4
&
\nabla_{e_3}e_4=-\alpha e_3
\\
\noalign{\medskip}
\nabla_{e_4}e_1= 0
&
\nabla_{e_4}e_2= 0
&
\nabla_{e_4}e_3= 0
&
\nabla_{e_4}e_4=0
\end{array}
$$
and the corresponding curvature tensor is determined (up to the usual symmetries) by the non-zero components
$$
\begin{array}{l}
R_{1331}=R_{1342}=\frac{1}{2}R_{2424}=2\alpha^2,\\
\noalign{\medskip}
R_{1221}=R_{1243}=R_{1414}=R_{1432}=R_{2323}=R_{3443}=\alpha^2.
\end{array}
$$
It immediately follows that the Ricci operator is diagonal on the basis $\{ e_1,\dots,e_4\}$ of the form $\operatorname{Ric}=-6\alpha^2\operatorname{diag}[0,1,0,1]$, and the scalar curvature $\tau=-12\alpha^2$.

\subsection*{Type III}
As discussed in Section \ref{section:2.3}, consider the space $\mathbb{R}^4$ with coordinates $(x,y,u,v)$ and the metric
$$
g=\lambda(dx\circ dx+dy\circ dy+dx\circ  dy)+
e^{-y}(2dx+dy)\circ dv+e^{-x}(dx+2dy)\circ du,
$$
where $\lambda\in \mathbb{R}$. We analyze separately the conformally flat and non-conformally flat cases.
Assuming $\lambda=0$, let $\{ e_1,\dots,e_4\}$ be the  orthonomal frame given by
\[
\begin{array}{ll}
   e_1 =  \partial_y  - \frac{e^x}{2}\partial_u,
   &
   e_2 = \frac{2}{\sqrt{3}}\partial_x
   + \frac{7 e^x}{6 \sqrt{3}} \partial_u
   - \frac{4 e^y}{3\sqrt{3}}\partial_v,
   \\[0.2cm]
   e_3 = \partial_x
   -\frac{1}{2}\partial_y
   -\frac{e^x}{3} \partial_u
   + \frac{2 e^y}{3}\partial_v,
   \quad
   &
      e_4 = -\frac{1}{\sqrt{3}}\partial_x
   -\frac{\sqrt{3}}{2}\partial_y
   -\frac{4 e^x}{3 \sqrt{3}}\partial_u
   + \frac{2 e^y}{3 \sqrt{3}}\partial_v.
\end{array}
\]
Then the Levi-Civita connection is given by
$$
\begin{array}{ll}
\nabla_{e_1}e_1=
	\frac{1}{18} (2 \sqrt{3} e_2 - 15 e_3 + \sqrt{3} e_4)
	\quad
&
\nabla_{e_1}e_2=
	-\frac{1}{3\sqrt{3}}(e_1 + e_3 - \sqrt{3} e_4)
\\
\noalign{\medskip}
\nabla_{e_1}e_3=
	-\frac{5}{6}  e_1 - \frac{1}{3\sqrt{3}} (e_2 + 2 e_4)
&
\nabla_{e_1}e_4=
	\frac{1}{6\sqrt{3}} (e_1 + 2 \sqrt{3} e_2 + 4 e_3)

\\
\noalign{\medskip}

\nabla_{e_2}e_1=
	-\frac{1}{9} (5 e_2 - 2 (\sqrt{3} e_3 + e_4))
&
\nabla_{e_2}e_2=
	\frac{1}{18} (10 e_1 + 19 e_3 + 11 \sqrt{3} e_4)
\\
\noalign{\medskip}
\nabla_{e_2}e_3=
	\frac{1}{18} (4 \sqrt{3} e_1 + 19 e_2 + 20 e_4)
&
\nabla_{e_2}e_4=
	\frac{1}{18} (4 e_1 + 11 \sqrt{3} e_2 - 20 e_3)

\\
\noalign{\medskip}

\nabla_{e_3}e_1=
	\frac{1}{2} (\sqrt{3} e_2 - e_3 + \sqrt{3} e_4)
&
\nabla_{e_3}e_2=
	-\frac{1}{2\sqrt{3}} (3 e_1 - e_3 + \sqrt{3} e_4)
\\
\noalign{\medskip}
\nabla_{e_3}e_3=
	-\frac{1}{6} (3 e_1 - \sqrt{3} (e_2 - e_4))
&
\nabla_{e_3}e_4=
	\frac{1}{2\sqrt{3}} (3 e_1 - \sqrt{3} e_2 + e_3)
	
\\
\noalign{\medskip}

\nabla_{e_4}e_1=
	\frac{1}{18} (11 e_2 + \sqrt{3} e_3 + e_4)
&
\nabla_{e_4}e_2=
	-\frac{1}{18} (11 e_1 + 11 e_3 + 13 \sqrt{3} e_4)
\\
\noalign{\medskip}
\nabla_{e_4}e_3=
	\frac{1}{18} (\sqrt{3} e_1 - 11 e_2 - 13 e_4)
&
\nabla_{e_4}e_4=
	\frac{1}{18} (e_1 - 13 (\sqrt{3} e_2 - e_3))
\end{array}
$$
Further, the curvature tensor expresses as
$$
\begin{array}{l}
R_{1212} = -2 R_{1214} = 4R_{1414}= -4R_{2323}
= -2R_{2334} = - R_{3434} =\frac{14}{9},

\\[0.05in]
\noalign{\medskip}

  R_{1313} = -   R_{2424} = -\frac{1}{6},

\\[0.05in]
\noalign{\medskip}

R_{1213} = \frac{3}{4}R_{1224} = -2 R_{1314} = -\frac{3}{2}R_{1323}

\\[0.05in]
\phantom{R_{1213}}
= -\frac{3}{4}R_{1334} =
-\frac{3}{2}R_{1424} = -2R_{2324} =
-R_{2434} = \frac{1}{\sqrt{3}}.
\end{array}
$$

Next, assume $\lambda\neq0$ in \eqref{GENERtype B metric} and consider the orthonormal frame
\[
\begin{array}{ll}
   e_1 =  \frac{1}{\sqrt{\lambda}}\partial_y
   - e^x\sqrt{\lambda} \partial_u,
   \quad
   &
   e_2 = -\frac{2}{\sqrt{3 \lambda}} \partial_x
   + \frac{1}{\sqrt{3 \lambda}} \partial_y
    - \frac{ e^x \sqrt{\lambda}}{\sqrt{3}} \partial_u
    + \frac{2e^y \sqrt{\lambda}}{\sqrt{3}}\partial_v,
   \\[0.1cm]
   \noalign{\medskip}
   e_3 = \frac{1}{\sqrt{3 \lambda}} \partial_x
    -\frac{2}{\sqrt{3 \lambda}}\partial_y,
   &
   e_4 = \frac{1}{\sqrt{\lambda}}\partial_x.
\end{array}
\]
A straightforward calculation shows that the Levi-Civita connection is given by
$$
\begin{array}{ll}
\nabla_{e_1}e_1=
	-\frac{1}{3\sqrt{\lambda}} (\sqrt{3} e_2 + \sqrt{3} e_3 + 3 e_4)
	\quad
&
\nabla_{e_1}e_2=
	 \frac{1}{\sqrt{3\lambda}} e_1
\\
\noalign{\medskip}
\nabla_{e_1}e_3=
	-\frac{1}{\sqrt{3\lambda}} (e_1 - e_4)
&
\nabla_{e_1}e_4=
	-\frac{1}{3\sqrt{\lambda}} (3 e_1 + \sqrt{3} e_3)
\\
\noalign{\medskip}

\nabla_{e_2}e_1=
	-\frac{1}{3\sqrt{\lambda}} (3 e_2 + 3 e_3 + \sqrt{3} e_4)
&
\nabla_{e_2}e_2=
	\frac{1}{3\sqrt{\lambda}} (3 e_1 + 2 \sqrt{3} e_3)
\\
\noalign{\medskip}
\nabla_{e_2}e_3=
	-	\frac{1}{3\sqrt{\lambda}} (3 e_1 - 2 \sqrt{3} e_2 - 3 e_4)
&
\nabla_{e_2}e_4=
	-	\frac{1}{\sqrt{3\lambda}} (e_1 + \sqrt{3} e_3)

\\
\noalign{\medskip}

\nabla_{e_3}e_1=
		\frac{1}{6\sqrt{\lambda}}  (3 e_3 + \sqrt{3} e_4)
&
\nabla_{e_3}e_2=
	\frac{1}{6\sqrt{\lambda}} (\sqrt{3} e_3 - 3 e_4)
\\
\noalign{\medskip}
\nabla_{e_3}e_3=
	\frac{1}{6\sqrt{\lambda}} (3 e_1 + \sqrt{3} e_2)
&
\nabla_{e_3}e_4=
	\frac{1}{6\sqrt{\lambda}} (\sqrt{3} e_1 - 3 e_2)
	
\\
\noalign{\medskip}

\nabla_{e_4}e_1=
	\frac{1}{6\sqrt{\lambda}} (\sqrt{3} e_3 - 3 e_4)
&
\nabla_{e_4}e_2=
	- \frac{1}{6\sqrt{\lambda}} (3 e_3 + \sqrt{3} e_4)
\\
\noalign{\medskip}
\nabla_{e_4}e_3=
	\frac{1}{6\sqrt{\lambda}} (\sqrt{3} e_1 - 3 e_2)
&
\nabla_{e_4}e_4=
	-\frac{1}{6\sqrt{\lambda}} (3 e_1 + \sqrt{3} e_2)
\end{array}
$$
Hence, the curvature is determined  by
$$
\begin{array}{l}
\!\! R_{1212} = -3R_{1213} = -\sqrt{3} R_{1214}=
\sqrt{3}R_{1223}  = -3R_{1224}= 3R_{1234}

\\[0.05in]
\phantom{\!\!R_{1212}}
\!= 12 R_{1313} \!=4\sqrt{3}R_{1314}
\!= -4\sqrt{3}R_{1323} \!=
12 R_{1324}\!=4R_{1414}
\\[0.05in]
\phantom{\!\!R_{1212}}
 \!=-4R_{1423}
\!= 4\sqrt{3}R_{1424}\!=4R_{2323} \!= -4\sqrt{3}R_{2324} \!=
12R_{2424} \!= -3R_{3434} \!= \frac{2}{\lambda}.
\end{array}
$$

\begin{remark}\rm
An immediate consequence of the expressions of the Levi-Civita connection in this Appendix and the work in \cite{CalvarusoJGP} is that \emph{any four-dimensional generalized symmetric space is locally a Lie group}.

It follows from previous expressions for the Levi-Civita connection that any complete and simply connected Type III generalized symmetric space is a Lie group which is given by a Lie algebra as follows with respect to an orthonormal basis $\{ e_1,\dots, e_4\}$ of signature $(--++)$.
$$
\begin{array}{ll}
{[e_1,e_2]}\!=\!\frac{\sqrt{3}}{3\sqrt{\lambda}}\left( e_1+\sqrt{3}e_2+\sqrt{3}e_3+e_4\right)
&
{[e_1,e_3]}\!=\!-\frac{\sqrt{3}}{6\sqrt{\lambda}}\left( 2e_1-\sqrt{3}e_3+e_4  \right)
\\
\noalign{\medskip}
{[e_1,e_4]}\!=\!-\frac{1}{2\sqrt{\lambda}}\left( 2e_1-\sqrt{3} e_3+e_4 \right)
&
{[e_2,e_4]}\!=\!-\frac{\sqrt{3}}{6\sqrt{\lambda}}\left( 2e_1+\sqrt{3}e_3-e_4 \right)
\\
\noalign{\medskip}
{[e_2,e_3]}\!=\!\frac{1}{6\sqrt{\lambda}}\left( -6e_1+4\sqrt{3} e_2-\sqrt{3}e_3+9e_4 \right)
&
\end{array}
$$
if $\lambda\neq 0$, and
$$
\begin{array}{l}
{[e_1,e_2]}=\frac{1}{9}\left(-\sqrt{3}e_1+5e_2-3\sqrt{3}e_3+e_4 \right)
\\
\noalign{\medskip}
{[e_1,e_3]}=\frac{1}{18}\left(-15e_1-11\sqrt{3}e_2+9e_3-13\sqrt{3}e_4  \right)
\\
\noalign{\medskip}
{[e_1,e_4]}=\frac{1}{18}\left(\sqrt{3}e_1-5e_2+3\sqrt{3}e_3-e_4\right)
\\
\noalign{\medskip}
{[e_2,e_3]}=\frac{1}{18}\left(13\sqrt{3}e_1+19e_2-3\sqrt{3}e_3+29e_4\right)
\\
\noalign{\medskip}
{[e_2,e_4]}=\frac{1}{18}\left(15e_1+11\sqrt{3}e_2-9e_3+13\sqrt{3}e_4 \right)
\\
\noalign{\medskip}
{[e_3,e_4]}=\frac{1}{18}\left(8\sqrt{3}e_1+2e_2+3\sqrt{3}e_3+13e_4  \right)
\end{array}
$$
if $\lambda=0$.
\end{remark}

\section*{Acknowledgement}
The Authors would like to thank Prof. G. Calvaruso for his interest in our work and useful comments.

%------------------------------------------------------------------------------
% End of journal.tex
%------------------------------------------------------------------------------

\end{document}